\newcommand{\n}{\operatorname{n}}
\newcommand{\epn}{\operatorname{epn}}
\newtheorem{remark}{Remark}[section]
\newtheorem{lemma}[remark]{Lemma}
\newtheorem{theorem}[remark]{Theorem}
\newtheorem{proposition}[remark]{Proposition}
\newtheorem{corollary}[remark]{Corollary}
\title{From the strong differential to Italian domination in graphs}
\author{A. Cabrera Mart\'inez, J. A. Rodr\'{\i}guez-Vel\'{a}zquez\\
{\small Universitat Rovira i Virgili }\\{\small Departament d'Enginyeria Inform\`atica i Matem\`atiques } \\  {\small Av. Pa\"{\i}sos
Catalans 26, 43007 Tarragona, Spain.} \\{\small
  abel.cabrera\@@urv.cat, juanalberto.rodriguez\@@urv.cat}}
\date{ }
\begin{document}
\maketitle

\begin{abstract}
Given a  graph $G$ and a subset of vertices $D\subseteq V(G)$, the external neighbourhood of $D$  is defined as $N_e(D)=\{u\in V(G)\setminus D:\, N(u)\cap D\ne \varnothing\}$, where $N(u)$ denotes the open neighbourhood of  $u$. Now, given a subset $D\subseteq V(G)$ and a vertex $v\in D$, the external private neighbourhood of $v$ with respect to $D$ is defined to be $\epn(v,D)=\{u\in V(G)\setminus D: \, N(u)\cap D=\{v\}\}.$  
The strong differential of a set $D\subseteq V(G)$ is defined as $\partial_s(D)=|N_e(D)|-|D_w|,$ where $D_w=\{v\in D:\, \epn(v,D)\ne \varnothing\}$.
In this paper we focus on the study of the strong differential of a graph, which 
is defined as 
$$\partial_s(G)=\max \{\partial_s(D):\, D\subseteq V(G)\}.$$
Among other results, we obtain general bounds on $\partial_s(G)$ and we prove a Gallai-type theorem, which states  that $\partial_s(G)+\gamma_{_I}(G)=\n(G)$, where  $\gamma_{_I}(G)$ denotes the Italian domination number of $G$. Therefore, we can see the theory of strong differential in graphs as a new approach to the theory of Italian domination. One of the advantages of this approach is that it allows us to study the Italian domination number without the use of functions. As we can expect, we derive new results on the Italian domination number of a graph. 
\end{abstract}

{\it Keywords}:
Italian domination; (strong) differential of a graph

\section{Introduction}

Given a graph $G=(V(G), E(G))$ and a vertex $v\in V(G)$, the {open neighbourhood} of $v$ is defined to be $N(v)=\{u\in V(G) :\, uv\in E(G)\}$. 
The {open neighbourhood of a set}  $S\subseteq V(G)$ is defined as 
$N(S)=\cup_{v\in S}N(v)$, while the {external neighbourhood} of $S$, or boundary of $S$,  is defined as $N_e(S)=N(S)\setminus S$. The {differential of a set} $S\subseteq V(G)$ is defined as $\partial(S)=|N_e(S)|-|S|$, while the {differential of a graph} $G$ is defined to be
$$\partial(G)=\max \{\partial(S):\, S\subseteq V(G)\}.$$
As described in \cite{MR2212796}, the definition of $\partial(G)$ was given by Hedetniemi about
twenty-five years ago in an unpublished paper, and was also considered by Goddard
and Henning \cite{MR1605086}. After that, the differential of a graph has been studied by several authors, including \cite{MR3711946,Bermudo-Roman,Sergio-differential-2014,PerfectDifferential,MR2212796,MR2724896}.

%----
Lewis et al.\ \cite{MR2212796} motivated the definition of differential from the
following game, what we call {graph differential game}. ``{You are allowed to buy as many tokens as you like, say $k$ tokens, at a cost of one dollar each. You then place the tokens on some
subset $D$ of $k$ vertices of a graph $G$. For each vertex of $G$ which has no token on
it, but is adjacent to a vertex with a token on it, you receive 
one dollar. Your
objective is to maximize your profit, that is, the total value received
minus the cost of the tokens bought}". Obviously, $\partial(D)=|N_e(D)|-|D|$ is the profit obtained with the placement  $D$, while the maximum profit equals $\partial(G)$.

%-----

Given a set $D\subseteq V(G)$ and a vertex $v\in D$, the {external private neighbourhood} of $v$ with respect to $D$ is defined to be $$\epn(v,D)=\{u\in V(G)\setminus D: \, N(u)\cap D=\{v\}\}.$$ We define the sets $D_w=\{v\in D:\, \epn(v,D)\ne \varnothing\}$ and $D_s=D\setminus D_w$. 

We consider a version of the graph differential game in which you will get a refund of one dollar  for each token placed in a vertex with no external private neighbour with respect to $D$, i.e., 
we will get a refund of one dollar  for a token placed in a vertex $v$ if either every neighbour of $v$ has a token on it or every neighbour of $v$ which has no token on
it  is also adjacent to a vertex different from $v$ with a token on it.
This version of the game can be called {graph differential game with refund}.
 Thus, we define the {strong differential of} $D$, denoted by $\partial_s(D)$, as  the profit obtained with the placement  $D$ in the graph differential game with refund,  which is  $\partial_s(D)=|N_e(D)|-|D_w|$. 
Notice that  
$$\partial_s(D)=|N_e(D)|-|D_w|=|N_e(D)|-|D|+|D_s|=\partial(D)+|D_s|.$$ 
 In the graph differential game with refund,  the maximum profit equals the {strong differential of} $G$, which is defined as
$$\partial_s(G)=\max \{\partial_s(D):\, D\subseteq V(G)\}.$$

For instance, consider the graph $G$ shown in Figure \ref{Fig-Example-2}. If  $D$ is the set of (gray and black) coloured vertices, and we place a token in each vertex belonging to $D$, then in the graph differential game with refund we get a refund of four dollars, as the black-coloured vertices do not have external private neighbours with respect to $D$. Although the gray-coloured vertex has four neighbours not belonging to $D$, it only contributes in three dollars to the profit, as it does not produce any  refund. 
Hence,  
the total profit obtained with the placement $D$ equals  $\partial_s(D)=|N_e(D)|-|D_w|=9-1=8$. Notice that for this graph, the maximum profit equals $\partial_s(G)=\partial_s(D)=8$. 

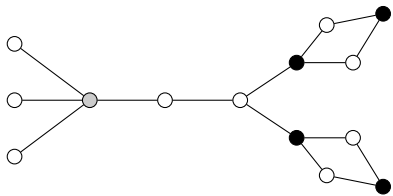
\begin{figure}[h]
\centering
\begin{tikzpicture}[scale=.5, transform shape]

\node [draw, shape=circle, fill=gray!40] (a1) at  (0,0) {};
\node [draw, shape=circle] (a11) at  (-2,0) {};
\node [draw, shape=circle] (a12) at  (-2,1.5) {};
\node [draw, shape=circle] (a13) at  (-2,-1.5) {};

\node [draw, shape=circle] (a2) at  (2,0) {};
\node [draw, shape=circle] (a3) at  (4,0) {};

\node [draw, shape=circle, fill=black] (b1) at  (5.5,1) {};
\node [draw, shape=circle] (b2) at  (7,1) {};
\node [draw, shape=circle, fill=black] (b3) at  (7.8,2.3) {};
\node [draw, shape=circle] (b4) at  (6.3,2) {};

\node [draw, shape=circle, fill=black] (c1) at  (5.5,-1) {};
\node [draw, shape=circle] (c2) at  (7,-1) {};
\node [draw, shape=circle, fill=black] (c3) at  (7.8,-2.3) {};
\node [draw, shape=circle] (c4) at  (6.3,-2) {};

\draw(a11)--(a1)--(a2)--(a3)--(b1)--(b2)--(b3)--(b4)--(b1);
\draw(a3)--(c1)--(c2)--(c3)--(c4)--(c1);
\draw(a1)--(a12);
\draw(a1)--(a13);
\end{tikzpicture}
\caption{A graph $G$ with $\partial_s(G)=8$.}
\label{Fig-Example-2} 
\end{figure}
Suppose that one ``entity" is stationed at some of the vertices of a  graph $G$ and that
an entity at a vertex can deal with a problem at any vertex in its neighbourhood. In general, an
entity could consist of a robot, an observer, a legion, a guard, and so on. In this sense, given a set $D\subseteq V(G)$, we say that all vertices in $N(D)$ are {defended} by the vertices in $D$.
The whole graph is deemed protected under $D$ if every vertex not in $D$ has a neighbour in $D$, and in such a case we say that $D$ is a {protector} of $G$. 
%Notice that a protector is simply a dominating set. 
 A vertex $u\in N_e(D)$ is {strongly defended} under $D$ if $|N(u)\cap D|\ge 2$, otherwise $u$ is {weakly defended} under $D$.
Now, a vertex $v\in D$ is a {weak defender} with respect to $D$
if $\epn(v,D)\ne \varnothing$, as there are vertices which are only defended by  $v$, and so $v$ does not have any help to defend its private neighbours. 
The set of weak defenders with respect to $D$ is $D_w$, while any $v\in D_s=D\setminus D_w$ will be a {strong defender} with respect to $D$.
 In this sense, we can see the strong differential  $\partial_s(D)=|N_e(D)|-|D_w|$   as a manner of quantify the quality of the protection of $G$ with  the placement $D$ of entities. In this case, we
have a penalty  of one unit per each  weak defender in $D$. With this approach, the strong differential allows us to compare two protectors, i.e.,  the larger $\partial_s(D)$ is, the better protector the set $D$ is. 
In Section  \ref{SectionGallai} we will show that for any graph, there exists a protector (dominating set) $D$ with $\partial_s(D)=\partial_s(G)$. Thus, $\partial_s(G)$ quantifies the maximum quality among the protectors  of $G$.
 In the same section  we will see that the concept of strong differential is closely related to the theory of Italian domination, which is one of the approaches to protection of graphs.
In that section, the role of weak and strong defenders will be clearly shown.

The remainder of the paper is organized as follows. 
In  Section \ref{section-tools} we introduce some notation and tools needed to develop  the remaining sections. 
In Section \ref{SectionGallai}  we prove a Gallai-type theorem 
which states  that $\partial_s(G)+\gamma_{_I}(G)=\n(G)$, where  $\gamma_{_I}(G)$ denotes the Italian domination number of $G$. We conclude the section showing  that the problem of finding $\partial_s(G)$  is NP-hard. 
Section  \ref{SectionGeneral Bonds}
 is devoted to provide   general results on the strong differential. We obtain  tight bounds, we show some classes of graphs for which the bounds are achieved  and, in some cases, we characterize the graphs achieving the bounds. Some of these results are derived from known results on the Italian domination number, while from others we can infer new results on this invariant. The paper ends with a brief concluding remark section (Section  
 \ref{SectionConsequences-Gallai}) where we summarize some of these results.

\section{Notation, terminology and basic tools}\label{section-tools}

Throughout the paper, we will use the notation  $G \cong H$ if $G$ and $H$ are isomorphic graphs.  The {closed neighbourhood} of a vertex $v$ is defined as $N[v]=N(v) \cup \{v\}$.  Given a set $S\subseteq V(G)$,  $N[S]=N(S)\cup S$ and the subgraph of $G$ induced by $S$ will be denoted by $G[S]$.  We denote by $\deg(v)=|N(v)|$ the {degree} of vertex $v$, as well as $\delta(G)=\min_{v \in V(G)}\{\deg(v)\}$ the {minimum degree} of $G$,   $\Delta(G)=\max_{v \in V(G)}\{\deg(v)\}$ the {maximum degree} of $G$ and $\n(G)=|V(G)|$ the order of $G$. 
A {leaf} of $G$ is a vertex of degree one. A {support vertex} of $G$ is a vertex which is adjacent to a leaf.  The set of leaves and support~vertices of $G$ will be denoted by $\mathcal{L}(G)$ and $\mathcal{S}(G)$, respectively.

A set $S\subseteq V(G)$ of vertices is a {dominating set}  if
$N(v)\cap S\ne \varnothing$ for every vertex   $v\in V(G)\setminus S$. 
Let $\mathcal{D}(G)$ be the set of dominating sets of $G$. 
The {domination number} of $G$ is defined to be, 
$$\gamma(G)=\min\{|S|:\, S\in \mathcal{D}(G)\}.$$
The domination number  has been extensively studied. For instance, we cite the following books,  \cite{Haynes1998a,Haynes1998}.

 We define a $\gamma(G)$-set as a set $S\in \mathcal{D}(G)$
with $|S|=\gamma(G)$.  The same agreement will be assumed for optimal parameters associated to other characteristic sets defined in the paper. 
 For instance,  a $\partial_s(G)$-set will be  a set $D\subseteq V(G)$ such that $\partial_s(D)=\partial_s(G)$.

A set $S$ of vertices of $G$ is a {vertex cover} if every edge of $G$ is incident with at least one vertex in $S$. The {vertex cover number} of $G$, denoted by $\beta(G)$, is the minimum  cardinality among all vertex covers of $G$. Recall that the largest cardinality of a set of vertices of $G$, no two of which are adjacent, is called the {independence number} of $G$ and it is denoted by $\alpha(G)$.
The following well-known result, due to Gallai, states the relationship between the independence number and the vertex cover number of a graph.

\begin{theorem}[Gallai's theorem, \cite{Gallai1959}]\label{th_gallai}
For any graph  $G$,
$$\alpha(G)+\beta(G) = \n(G).$$
\end{theorem}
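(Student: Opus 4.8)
The plan is to exploit the elementary but pivotal observation that vertex covers and independent sets are complementary notions, and then convert a bijection between them into the claimed numerical identity. The whole argument hinges on a single equivalence: for a set $S\subseteq V(G)$, the set $S$ is independent if and only if its complement $V(G)\setminus S$ is a vertex cover. To see this, note that $S$ being independent means no edge of $G$ has both endpoints in $S$; equivalently, every edge has at least one endpoint outside $S$, i.e.\ in $V(G)\setminus S$; and this last condition is precisely the definition of $V(G)\setminus S$ being a vertex cover. Thus complementation $S\mapsto V(G)\setminus S$ is a bijection carrying independent sets to vertex covers and vice versa.

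With this equivalence in hand, I would derive the two inequalities that pinch the identity. First, take a maximum independent set $S$, so $|S|=\alpha(G)$. By the equivalence, $V(G)\setminus S$ is a vertex cover of cardinality $\n(G)-\alpha(G)$, whence $\beta(G)\le \n(G)-\alpha(G)$, i.e.\ $\alpha(G)+\beta(G)\le \n(G)$. Symmetrically, take a minimum vertex cover $C$, so $|C|=\beta(G)$; then $V(G)\setminus C$ is an independent set of cardinality $\n(G)-\beta(G)$, giving $\alpha(G)\ge \n(G)-\beta(G)$, i.e.\ $\alpha(G)+\beta(G)\ge \n(G)$. Combining the two inequalities yields the desired equality $\alpha(G)+\beta(G)=\n(G)$.

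There is no substantial obstacle here: this is a classical result whose content is entirely captured by the complementation equivalence, and once that is stated the numerical identity follows by two one-line optimality arguments. If anything, the only point demanding mild care is the logical manipulation in establishing the equivalence (translating ``no edge lies inside $S$'' into ``every edge meets $V(G)\setminus S$''), but this is routine. I would keep the exposition short, stating the equivalence as the key step and then immediately reading off both inequalities, since any additional machinery would only obscure how direct the argument is.
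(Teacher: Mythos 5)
Your proof is correct: the complementation equivalence (a set is independent if and only if its complement is a vertex cover) together with the two optimality arguments is the standard and complete derivation of Gallai's identity. The paper itself states this classical theorem only as a cited result from \cite{Gallai1959} and gives no proof, so there is nothing to compare against; your argument is exactly the textbook one and needs no changes.
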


Let $f: V(G)\longrightarrow \{0,1,2\}$ be a function and $V_i=\{v\in V(G):\; f(v)=i\}$ for $i\in \{0,1,2\}$. We will identify $f$ with these subsets of $V(G)$ induced by $f$, and write $f(V_0,V_1, V_2).$ The {weight} of $f$ is defined to be $$\omega(f)=f(V(G))=\sum_{v\in V(G)}f(v)=\sum_ii|V_i|.$$

 Cockayne, Hedetniemi and Hedetniemi \cite{Cockayne2004} defined a {\it Roman dominating  function}, abbreviated RDF, on a graph $G$ to be a function $f(V_0,V_1, V_2)$ satisfying the condition that every vertex $u\in V_0$  is adjacent to at least one vertex $v\in V_2$. 
  The \emph{Roman domination number}, denoted by   
    $\gamma_{_R}(G)$, is the minimum weight among all RDFs on $G$, \emph{i.e.}, $$\gamma_{_R}(G)=\min\{\omega(f):\, f \text{ is a RDF on } G\}.$$

A generalization of Roman domination, called Italian domination, was introduced by Chellali et al.\ in \cite{CHELLALI201622}, where it was called Roman $\{2\}$-domination. The concept was studied further in \cite{HENNING2017557,Klostermeyer201920}. An {Italian dominating function}, abbreviated IDF, on a graph $G$   is a
function $f(V_0,V_1,V_2)$ satisfying that $f(N(v))=\sum_{u\in N(v)}f(u)\ge 2$ for every $v\in V_0$, i.e., $f(V_0,V_1,V_2)$ is an IDF if $N(v)\cap V_2\ne \varnothing$ or $|N(v)\cap V_1|\ge 2$ for every $v\in V_0$.

The \emph{Italian domination number}, denoted by  $\gamma_{_I}(G)$, is the minimum weight among all  IDFs on $G$, \emph{i.e.}, $$\gamma_{_I}(G)=\min\{\omega(f):\, f \text{ is an IDF on } G\}.$$

Given a function $f$ on $G$ and a vertex $v\in V(G)$, we can assume that $f(v)$ is the number of entities placed in $v$. In the theory of Italian domination, a graph $G$ is deemed  protected under $f$ if every vertex of weight zero is protected by at least two entities, i.e., if $f$ is an IDF. 
An IDF of weight  $\omega(f)=\gamma_{_I}(G)$ is called a $\gamma_{_I}(G)$-function. In Section \ref{SectionGallai} we will see that for any $\gamma_{_I}(G)$-function $f(V_0,V_1,V_2)$ on $G$, the set $D=V_1\cup V_2$ is a  
 protector of $G$ of maximum quality, as $\partial_s(D)=\partial_s(G)$. Furthermore, $D_w=V_2$ and $D_s=V_1$. Therefore, in the theory of Italian domination, any $\gamma_{_I}(G)$-function provides the minimum number of entities needed to protect the graph and also provides a protector of maximum quality, according to the approach described in the previous section.

We assume that the reader is familiar with the basic concepts, notation  and terminology of domination in graph. If this is not the case, we suggest the textbooks \cite{Haynes1998a,Haynes1998}.  For the remainder of the paper, definitions will be introduced whenever a concept is needed. In particular, this is the case for concepts, notation and terminology to be used only once.

\section{A Gallai-type theorem}\label{SectionGallai}

In order to deduce our results, we need to state the following basic lemma.

\begin{lemma}\label{Strong-dif-dominante}
For any graph $G$, there exists a $\partial_s(G)$-set  which is a dominating set of $G$. 
\end{lemma}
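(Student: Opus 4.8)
The plan is to argue by an extremal/contradiction approach: among all $\partial_s(G)$-sets, pick one, call it $D$, for which the closed neighbourhood $N[D]$ is as large as possible (equivalently, for which the set of undominated vertices $V(G)\setminus N[D]$ is smallest; this is well defined since $G$ has finitely many subsets). I claim such a $D$ must be a dominating set. If it were not, there would exist a vertex $u$ with $u\notin D$ and $N(u)\cap D=\varnothing$, and I will show that $D'=D\cup\{u\}$ is again a $\partial_s(G)$-set while strictly enlarging $N[D]$, contradicting the choice of $D$.

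The heart of the argument is to compare $\partial_s(D')$ with $\partial_s(D)$. First I would track the boundary. Since $u$ has no neighbour in $D$, we have $u\notin N_e(D)$, so adding $u$ removes nothing from the old boundary and brings into the boundary exactly those neighbours of $u$ that were previously undominated. Writing $A=\{w\in N(u):\, w\notin D,\ N(w)\cap D=\varnothing\}$, this gives $|N_e(D')|=|N_e(D)|+|A|$. Next I would track the weak defenders. For each old vertex $v\in D$ one checks that $N(w)\cap D'=\{v\}$ forces $N(w)\cap D=\{v\}$ and $u\notin N(w)$, so $\epn(v,D')=\epn(v,D)\setminus N(u)\subseteq \epn(v,D)$; hence a strong defender stays strong and $D_w'\cap D\subseteq D_w$. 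The only possible new weak defender is $u$ itself, and indeed $\epn(u,D')=A$, so $u\in D_w'$ precisely when $A\neq\varnothing$. Combining these observations, $|D_w'|\le |D_w| + [A\neq\varnothing]$.

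Putting the two counts together yields $\partial_s(D') = |N_e(D')|-|D_w'| \ge \partial_s(D) + |A| - [A\neq\varnothing] \ge \partial_s(D)$, because the correction term is $0$ when $A=\varnothing$ and equals $|A|-1\ge 0$ when $A\neq\varnothing$. Since $\partial_s(G)$ is the maximum of $\partial_s(\cdot)$ over all subsets of $V(G)$, this forces $\partial_s(D')=\partial_s(G)$, so $D'$ is also a $\partial_s(G)$-set. On the other hand $u\notin N[D]$ while $u\in N[D']$, so $N[D]\subsetneq N[D']$, contradicting the maximality of $|N[D]|$. Therefore $D$ dominates $G$, which proves the lemma.

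The main obstacle I anticipate is the bookkeeping for the external private neighbourhoods: one must verify carefully that adding $u$ can only shrink $\epn(v,D)$ for each previously chosen vertex $v$ (so that no strong defender is turned into a weak one), and that $u$ contributes at most one new weak defender while simultaneously contributing exactly $|A|$ new boundary vertices. Once this balance between the gain $|A|$ and the penalty $[A\neq\varnothing]$ is pinned down, the inequality $\partial_s(D')\ge\partial_s(D)$ and hence the entire extremal argument follow routinely.
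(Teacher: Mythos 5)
Your proof is correct and follows essentially the same route as the paper: both arguments take an extremal $\partial_s(G)$-set (the paper maximizes $|D|$, you maximize $|N[D]|$), add an undominated vertex $u$, and verify that the gain of $|A|$ new boundary vertices offsets the at most one new weak defender, so optimality is preserved and the extremal choice is contradicted. The paper merely splits your unified inequality $\partial_s(D')\ge\partial_s(D)+|A|-[A\ne\varnothing]$ into the two cases $\epn(u,D')\ne\varnothing$ and $\epn(u,D')=\varnothing$; the bookkeeping for $N_e(D')$ and $D'_w$ is identical.
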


\begin{proof}
Let $D$ be a $\partial_s(G)$-set   such that $|D|$ is maximum among all $\partial_s(G)$-sets. If $D \in \mathcal{D}(G)$, then we are done.  Suppose that $D\not \in \mathcal{D}(G)$. Let  $v\in V(G)\setminus D$ such that $N(v)\cap D=\varnothing$ and let $D'=D\cup \{v\}$. We differentiate two cases.

\vspace{0.2cm}
\noindent
Case 1.  $epn(v,D')\ne \varnothing$. In this case,  $D'_w\subseteq D_w\cup \{v\}$ and $|N(v)\setminus N[D]|\ge 1$, which implies that 
$\partial_s(D')\ge |N_e(D)|+|N(v)\setminus N[D]|-(|D_w|+1)\ge |N_e(D)|-|D_w|=\partial_s(D)=\partial_s(G).$ Therefore, $D'$ is a $\partial_s(G)$-set with $|D'|>|D|$, which is a contradiction.

\vspace{0.2cm}
\noindent
Case 2.  $epn(v,D')= \varnothing$. In this case, $D'_w\subseteq D_w$ and $N_e(D')=N_e(D)$, which implies that $\partial_s(D')\ge |N_e(D)|-|D_w|=\partial_s(D)=\partial_s(G).$ As above, $D'$ is a $\partial_s(G)$-set with $|D'|>|D|$, which is a contradiction.

According to the two cases above, $D$ is a dominating set.  Therefore, the result follows. 
\end{proof}

The following straightforward  remark 
will be one of our tools.
\begin{remark}
\label{eq-2}
If $D\in \mathcal{D}(G)$ is a $\partial_s(G)$-set, then
$$
\partial_s(G)=\n(G)-|D|-|D_w|=\n(G)-|D_s|-2|D_w|.
$$
\end{remark}

%%%%%%%%%%%%%%%%%%%%
%%%%%%%%%%%%%%%

 A Gallai-type theorem has the form $a(G)+ b(G)=\n(G)$, where $a(G)$ and $b(G)$ are parameters defined on $G$.  This terminology comes from Theorem \ref{th_gallai}, which was stated in 1959 by the prolific  Hungarian mathematician
 Tibor Gallai.

% The following Gallai-type theorem was obtained by Bermudo, Fernau and Sigarreta in \cite{Bermudo-Roman}. 

\begin{theorem}[Gallai-type theorem for the differential and the Roman domination number,  \cite{Bermudo-Roman}]\label{Th-Gallai-Roman}
For any graph $G$,
$$\gamma_{_R}(G)+\partial(G)=\n(G).$$
\end{theorem}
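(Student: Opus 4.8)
The plan is to prove the two inequalities $\gamma_{_R}(G)\ge \n(G)-\partial(G)$ and $\gamma_{_R}(G)\le \n(G)-\partial(G)$ separately, using a dictionary that turns a vertex set into a three-part function and back. The underlying identity is that for any function $f(V_0,V_1,V_2)$ we have $\n(G)-\omega(f)=(|V_0|+|V_1|+|V_2|)-(|V_1|+2|V_2|)=|V_0|-|V_2|$, so controlling the weight is the same as controlling $|V_0|-|V_2|$.

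For the lower bound on $\gamma_{_R}(G)$, I would start from an arbitrary RDF $f(V_0,V_1,V_2)$ and take $S=V_2$. Since $f$ is an RDF, every vertex of $V_0$ has a neighbour in $V_2=S$ and lies outside $S$, so $V_0\subseteq N_e(S)$ and hence $|V_0|\le |N_e(S)|$. Combining this with the identity above gives $\n(G)-\omega(f)=|V_0|-|V_2|\le |N_e(S)|-|S|=\partial(S)\le \partial(G)$. Applying this to a minimum-weight RDF yields $\n(G)-\gamma_{_R}(G)\le \partial(G)$, that is, $\gamma_{_R}(G)\ge \n(G)-\partial(G)$.

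For the upper bound, I would run the construction in reverse, starting from a $\partial(G)$-set $S$. Define $V_2=S$, $V_0=N_e(S)$ and $V_1=V(G)\setminus N[S]$; since $N[S]=S\cup N_e(S)$ with $S$ and $N_e(S)$ disjoint, these three sets partition $V(G)$. The function $f(V_0,V_1,V_2)$ is an RDF because every vertex of $V_0=N_e(S)$ is, by definition of the external neighbourhood, adjacent to a vertex of $S=V_2$, while vertices of $V_1$ carry weight $1$ and impose no further condition. Its weight is $\omega(f)=|V_1|+2|V_2|=(\n(G)-|S|-|N_e(S)|)+2|S|=\n(G)-(|N_e(S)|-|S|)=\n(G)-\partial(S)=\n(G)-\partial(G)$, so $\gamma_{_R}(G)\le \n(G)-\partial(G)$.

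The two bounds together give $\gamma_{_R}(G)+\partial(G)=\n(G)$. The only place that requires a little care is the lower-bound direction: from an RDF one obtains only the containment $V_0\subseteq N_e(S)$ rather than equality, since a vertex of $N_e(S)$ may have been assigned weight $1$ and placed in $V_1$; thus one must be content with the inequality $|V_0|\le|N_e(S)|$ there. This is harmless, because the reverse construction already produces a function attaining the bound, so the two inequalities pin down the common value.
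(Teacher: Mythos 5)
Your proof is correct. Note first that the paper does not actually prove this statement: it is imported from \cite{Bermudo-Roman} as a known result, so there is no in-paper argument to compare against line by line. The natural comparison is with the paper's own proof of the analogous Theorem \ref{Th-Gallai-Roman-Strong} ($\gamma_{_I}(G)+\partial_s(G)=\n(G)$), and your argument mirrors that proof's two-directional ``set $\leftrightarrow$ function'' dictionary: from an optimal function you extract a set whose differential is at least $\n(G)-\omega(f)$, and from an optimal set you build a function of weight $\n(G)-\partial(G)$. The one genuinely different (and nice) ingredient is your upper-bound construction: by placing the undominated vertices $V(G)\setminus N[S]$ into $V_1$ with weight $1$, you avoid needing any analogue of Lemma \ref{Strong-dif-dominante}, i.e.\ you never have to argue that some $\partial(G)$-set is dominating. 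The paper's strong-differential proof cannot take that shortcut in the first direction, because Remark \ref{eq-2} and the identification $W_2=D_w$, $W_1=D_s$ are only clean when $D$ dominates; your Roman version is correspondingly more self-contained. Your closing caveat is also accurate: from an RDF one only gets $V_0\subseteq N_e(V_2)$, not equality, and the inequality is all that is needed since the reverse construction is exact.
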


Inspired by the previous result, it is natural to ask if we are able to establish a Gallai-type theorem involving the strong differential. Precisely, the next result states the relationship between the strong differential and the Italian domination number.

\begin{theorem}[Gallai-type theorem for the strong differential and the Italian domination number]\label{Th-Gallai-Roman-Strong}
For any graph $G$,
$$\gamma_{_I}(G)+\partial_s(G)=\n(G).$$
\end{theorem}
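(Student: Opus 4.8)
The plan is to establish the two inequalities $\partial_s(G)\ge \n(G)-\gamma_{_I}(G)$ and $\gamma_{_I}(G)\le \n(G)-\partial_s(G)$ separately, each by exhibiting an explicit correspondence between optimal objects: an Italian dominating function on one side and a dominating $\partial_s(G)$-set on the other. The conceptual heart of both directions is the same dictionary: in the set $D=V_1\cup V_2$ attached to an IDF, the vertices of $V_2$ play the role of weak defenders ($D_w$) and the vertices of $V_1$ play the role of strong defenders ($D_s$), and the IDF condition $f(N(v))\ge 2$ is exactly the translation of the external-private-neighbour constraint.

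For the inequality $\partial_s(G)\ge \n(G)-\gamma_{_I}(G)$, I would start from a $\gamma_{_I}(G)$-function $f(V_0,V_1,V_2)$ and take $D=V_1\cup V_2$. First I would show $N_e(D)=V_0$: every $u\in V_0$ satisfies $f(N(u))\ge 2$, so $u$ has a neighbour of positive weight, i.e.\ a neighbour in $D$, giving $V_0\subseteq N_e(D)$, and the reverse containment is immediate since $V(G)\setminus D=V_0$. Next I would prove $D_w\subseteq V_2$: if some $v\in V_1$ had an external private neighbour $u$, then $N(u)\cap D=\{v\}$ would force $f(N(u))=f(v)=1<2$, contradicting that $f$ is an IDF. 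Consequently $|D_w|\le|V_2|$, and therefore
$$\partial_s(D)=|N_e(D)|-|D_w|\ge |V_0|-|V_2|=\n(G)-|V_1|-2|V_2|=\n(G)-\omega(f)=\n(G)-\gamma_{_I}(G),$$
whence $\partial_s(G)\ge\partial_s(D)\ge\n(G)-\gamma_{_I}(G)$.

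For the reverse inequality $\gamma_{_I}(G)\le\n(G)-\partial_s(G)$, I would invoke Lemma~\ref{Strong-dif-dominante} to fix a $\partial_s(G)$-set $D$ that is dominating, so that $V(G)\setminus D=N_e(D)$, and define $f$ by setting $f(v)=2$ for $v\in D_w$, $f(v)=1$ for $v\in D_s$, and $f(v)=0$ otherwise. To verify that $f$ is an IDF, I would take any $u\in N_e(D)$ and split into two cases: if $|N(u)\cap D|\ge 2$ then $u$ has two neighbours of weight at least $1$, so $f(N(u))\ge2$; if $N(u)\cap D=\{v\}$ for a single $v$, then $u\in\epn(v,D)$ forces $v\in D_w$ and hence $f(v)=2$, again giving $f(N(u))\ge2$. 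Then $\omega(f)=2|D_w|+|D_s|=|D|+|D_w|$, and applying Remark~\ref{eq-2} to the dominating $\partial_s(G)$-set $D$ yields $|D|+|D_w|=\n(G)-\partial_s(G)$, so $\gamma_{_I}(G)\le\omega(f)=\n(G)-\partial_s(G)$.

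Combining the two inequalities gives $\partial_s(G)=\n(G)-\gamma_{_I}(G)$, which is the claim. The step I expect to require the most care is the private-neighbour analysis that identifies $D_w$ with $V_2$: in the forward direction it is the implication that a weight-$1$ vertex cannot serve a private neighbour, and in the reverse direction it is the verification of the IDF condition at a vertex with a unique dominator, both of which hinge on correctly reading $\epn(v,D)\ne\varnothing$ as the obstruction to lowering a weight from $2$ to $1$. The remaining computations are routine bookkeeping once this correspondence and Remark~\ref{eq-2} are in hand.
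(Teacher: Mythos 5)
Your proposal is correct and follows essentially the same route as the paper: one direction builds an IDF from a dominating $\partial_s(G)$-set supplied by Lemma~\ref{Strong-dif-dominante} (weight $2$ on $D_w$, weight $1$ on $D_s$) and applies Remark~\ref{eq-2}, while the other evaluates $\partial_s$ on $D=V_1\cup V_2$ for a $\gamma_{_I}(G)$-function. Your version is in fact slightly more careful than the paper's, which asserts $D'_w=V_2$ as ``readily seen'', whereas you only prove and use the inclusion $D_w\subseteq V_2$ --- which is all the argument needs.
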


\begin{proof}
By Lemma \ref{Strong-dif-dominante}, there exists a $\partial_s(G)$-set $D$ which is a dominating set of $G$. Hence, the function $g(W_0,W_1,W_2)$, defined from $W_1=D_s$ and $W_2=D_w$, is an IDF on $G$, which implies that $\gamma_{_I}(G)\le \omega(g)= 2|D_w|+|D_s|$. Therefore, Remark \ref{eq-2} leads to $\gamma_{_I}(G)\le \n(G)-\partial_s(G).$

We proceed to show that $\gamma_{_I}(G)\ge \n(G)-\partial_s(G).$
Let $f(V_0,V_1,V_2)$ be a $\gamma_{_I}(G)$-function.
% where $|V_2|$ is minimum among all $\gamma_{_I}(G)$-functions. 
It is readily seen that for $D'=V_1\cup V_2$ we have  that $D'_s=V_1$ and $D'_w=V_2$. Thus, 
\begin{align*}
\partial_s(G)&\ge \partial_s(D')\\
             &=|N_e(D')|-|D'_w|\\
             &=|V(G)\setminus (V_1\cup V_2)|-|V_2|\\
             &=\n(G)-2|V_2|-|V_1|\\
             &=\n(G)-\gamma_{_I}(G).
\end{align*} 
Therefore, the result follows.
\end{proof}

Theorem \ref{Th-Gallai-Roman-Strong} allows us to derive results on the Italian  domination number from  results on the strong differential  and vice versa. In the next sections we present some of these results.

Now we analyse  the case of the computational complexity.  
Given a positive integer $k$ and a graph $G$, the  problem of deciding if  $G$ has an Italian dominating  function   $f$ of weight  $\omega(f)\le k$  is NP-complete \cite{CHELLALI201622}. Hence, the  problem of computing the Italian domination number of a graph is NP-hard.   Therefore, by Theorem \ref{Th-Gallai-Roman-Strong} we immediately obtain the analogous result  for the strong differential.  

\begin{theorem}
The  problem of computing the strong differential of a graph  is  $NP$-hard.
\end{theorem}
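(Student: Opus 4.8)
The plan is to establish the NP-hardness of computing $\partial_s(G)$ by a polynomial-time reduction from the problem of computing the Italian domination number $\gamma_{_I}(G)$, whose NP-hardness is already known from \cite{CHELLALI201622}. The Gallai-type identity proved in Theorem \ref{Th-Gallai-Roman-Strong}, namely $\gamma_{_I}(G)+\partial_s(G)=\n(G)$, is exactly the tool that makes this reduction essentially trivial, since it exhibits the two invariants as complementary quantities on a fixed vertex set.

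First I would recall the decision version of the Italian domination problem: given a graph $G$ and a positive integer $k$, decide whether $G$ admits an IDF of weight at most $k$, equivalently whether $\gamma_{_I}(G)\le k$. By \cite{CHELLALI201622} this decision problem is NP-complete. Next I would rewrite this condition using the Gallai-type theorem. Since $\gamma_{_I}(G)=\n(G)-\partial_s(G)$, the inequality $\gamma_{_I}(G)\le k$ is equivalent to $\n(G)-\partial_s(G)\le k$, that is, to $\partial_s(G)\ge \n(G)-k$. Thus the question ``Is $\gamma_{_I}(G)\le k$?'' transforms, on the same graph $G$, into the question ``Is $\partial_s(G)\ge \n(G)-k$?''. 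The value $\n(G)-k$ is computable in polynomial time from the input, so any algorithm solving the strong-differential decision problem would solve the Italian-domination decision problem with no extra overhead.

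The argument then concludes as follows. If one could compute $\partial_s(G)$ in polynomial time, one could compute $\gamma_{_I}(G)=\n(G)-\partial_s(G)$ in polynomial time as well, contradicting the NP-hardness of computing the Italian domination number (which follows from the NP-completeness of its decision version). Hence computing $\partial_s(G)$ is NP-hard. I would phrase this as a direct consequence of Theorem \ref{Th-Gallai-Roman-Strong}, emphasizing that the complement identity is a cost-free reduction in both directions.

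I do not anticipate a genuine mathematical obstacle here, as the reduction is immediate once the Gallai-type identity is in hand; the only point requiring care is the bookkeeping of the threshold under complementation (the inequality $\gamma_{_I}(G)\le k$ must be correctly flipped to $\partial_s(G)\ge \n(G)-k$, with attention to the direction of the inequality) and the observation that $\n(G)$ and the arithmetic $\n(G)-k$ are polynomial-time computable. The substantive content has already been absorbed into the proof of Theorem \ref{Th-Gallai-Roman-Strong}, so this statement is genuinely a corollary, and the write-up would simply make the reduction explicit.
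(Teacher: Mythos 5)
Your proposal is correct and follows essentially the same route as the paper: the authors likewise invoke the NP-completeness of the Italian domination decision problem from \cite{CHELLALI201622} and then transfer NP-hardness to $\partial_s(G)$ immediately via Theorem \ref{Th-Gallai-Roman-Strong}. Your write-up merely makes the threshold bookkeeping $\gamma_{_I}(G)\le k \Longleftrightarrow \partial_s(G)\ge \n(G)-k$ explicit, which the paper leaves implicit.
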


\section{General bounds}\label{SectionGeneral Bonds}

We next present tight bounds on the strong differential of a graph.
In some cases we provide classes of graphs achieving the bounds, while in other cases we characterize the graphs reaching the equalities.

\subsection{Bounds in terms of the order}

In this subsection,  we show some interesting results on the strong differential in terms of the order of $G$. These results are directly derived   from known results on the Italian domination number.

\begin{theorem}{\rm \cite{Klostermeyer201920}}\label{PropKlostermeyer}
If $G$ is a connected graph with $\n(G)\ge  3$, then  $$\gamma_{_I}(G)\leq \frac{3}{4}\n(G).$$
Furthermore, if  $G$ is a connected graph with $\delta(G)\ge  2$, then  $$\gamma_{_I}(G)\leq \frac{2}{3}\n(G),$$
while if $\delta(G)\ge  3$, then  $$\gamma_{_I}(G)\leq \frac{1}{2}\n(G).$$
\end{theorem}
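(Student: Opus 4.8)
The plan is to establish each of the three inequalities by exhibiting an explicit Italian dominating function of the required weight; since $\gamma_{_I}(G)$ is the minimum weight of an IDF, any such construction yields the bound. For the first inequality it is convenient to reduce to trees. Indeed, if $T$ is a spanning tree of $G$ and $f$ is an IDF on $T$, then for every $v$ with $f(v)=0$ we have $\sum_{u\in N_G(v)}f(u)\ge \sum_{u\in N_T(v)}f(u)\ge 2$, so $f$ is also an IDF on $G$; hence $\gamma_{_I}(G)\le \gamma_{_I}(T)$. Thus it suffices to prove $\gamma_{_I}(T)\le \tfrac34\n(T)$ for every tree $T$ with $\n(T)\ge 3$.

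For trees I would argue by induction on the order, peeling a chunk around a deepest leaf. Root $T$ at an endpoint of a longest path and let $\ell$ be a deepest leaf with support vertex $s$. If $s$ carries several leaves, assign $f(s)=2$ and $0$ to all its leaves and delete $s$ together with its leaves: at least two vertices are removed while the weight grows by $2$, so the ratio is preserved. Otherwise $s$ has a single leaf $\ell$ and a parent $p$; assign $f(s)=2$, $f(\ell)=f(p)=0$ and delete $\{\ell,s,p\}$, adding weight $2$ while removing $3$ vertices, so that $\gamma_{_I}(T)\le\gamma_{_I}(T')+2\le \tfrac34(\n(T)-3)+2\le\tfrac34\n(T)$. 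In each case the assignment on the deleted chunk leaves the IDF condition untouched at the surviving vertices (in particular at $p$ or at the grandparent), so an optimal IDF of the residual forest extends to $T$. The routine part is checking the degenerate configurations and the small residual components ($K_1$, $K_2$ and stars), which serve as base cases.

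For the minimum-degree bounds the spanning-tree reduction is unavailable, since it destroys $\delta(G)$, so I would instead build the IDF directly on $G$ and use the extra edges guaranteed by $\delta(G)\ge 2$ or $\delta(G)\ge 3$ to spend strictly less than $\tfrac34$ per vertex. A workable engine is a discharging/amortisation argument on a depth-first-search spanning tree: every DFS-leaf has, besides its parent, at least $\delta(G)-1$ back-edges to proper ancestors, and these back-edges let neighbouring weight-$0$ vertices share a single weight-$2$ vertex, which is exactly the saving needed to pass from $\tfrac34\n(G)$ to $\tfrac23\n(G)$ for $\delta(G)\ge 2$ and to $\tfrac12\n(G)$ for $\delta(G)\ge 3$. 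Equivalently, in the language of this paper one may read the three statements through Theorem~\ref{Th-Gallai-Roman-Strong} as the strong-differential lower bounds $\partial_s(G)\ge\tfrac14\n(G)$, $\partial_s(G)\ge\tfrac13\n(G)$ and $\partial_s(G)\ge\tfrac12\n(G)$, and aim to produce a dominating set $D$ realising the corresponding value of $|N_e(D)|-|D_w|$; the weak/strong defender dichotomy makes the bookkeeping transparent.

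The main obstacle is the last inequality: the factor $\tfrac12$ for $\delta(G)\ge 3$ is tight (attained already by $K_4$) and, unlike the first two, resists any greedy, probabilistic or spanning-tree argument, since a straightforward random assignment of weight $2$ to a $p$-fraction of the vertices only yields a per-vertex bound of the form $2p+(1-p)^{1+\delta(G)}$, which stays well above $\tfrac12$ when $\delta(G)=3$. Consequently a genuinely global discharging scheme, carefully tuned so that each weight-$2$ vertex is amortised against at least three vertices it privately covers, is what I would expect to require the most work; the remaining effort is the exhaustive but elementary verification of the boundary configurations in each of the three regimes.
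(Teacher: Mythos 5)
First, a point of reference: the paper does not prove this statement at all; it is quoted from \cite{Klostermeyer201920}, so there is no in-paper argument to compare yours against. Judged on its own terms, your proposal has two genuine gaps. The first is in the tree induction. Your Case~2 sets $f(s)=2$, $f(\ell)=f(p)=0$ and deletes $\{\ell,s,p\}$, but if $p$ has further leaf children these become isolated vertices of the residual forest, and an isolated vertex must receive weight at least $1$ in any IDF. Concretely, take the spider with centre $p$, one leg $p$--$s$--$\ell$ of length two and $k\ge 2$ pendant leaves at $p$: your scheme produces weight $2+k$, while $\tfrac34\n(T)=\tfrac34(k+3)<2+k$ already for $k=2$. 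The bound is true for this tree (put weight $2$ on $p$ and weight $1$ on $\ell$), which shows that the placement of the weight-$2$ vertex must depend on the structure around $p$; that case analysis is the actual content of the proof, not a ``routine degenerate configuration''. The same leak occurs whenever a residual component is $K_1$ or $K_2$, since $\gamma_{_I}(K_1)=1>\tfrac34$ and $\gamma_{_I}(K_2)=2>\tfrac32$, so the inductive inequality $\gamma_{_I}(T)\le\gamma_{_I}(T')+2\le\tfrac34(\n(T)-3)+2$ cannot simply be applied to the residual.

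The second and more serious gap concerns the hypotheses $\delta(G)\ge 2$ and $\delta(G)\ge 3$: there you offer only a programme (a DFS-based discharging scheme) and you yourself concede that the $\tfrac12\n(G)$ bound ``resists'' the techniques you propose and would ``require the most work''. A proposal whose central step is declared unresolved does not establish the statement. A route that actually closes the $\delta(G)\ge 2$ case, using only tools already quoted in this paper, is to combine $\gamma_{_I}(G)\le\gamma_{_2}(G)$ (Theorem~\ref{LowerBoundTrees}) with the classical bound $\gamma_{_2}(G)\le\tfrac23\n(G)$ for graphs of minimum degree at least two; the $\delta(G)\ge 3$ case needs either the analogous $2$-domination bound or the direct argument of \cite{Klostermeyer201920}. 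Your spanning-tree reduction (an IDF on a spanning subgraph is an IDF on $G$) is correct and is the right first move for the $\tfrac34\n(G)$ bound, but as written the proposal does not prove any of the three inequalities.
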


Therefore, from Theorems  \ref{Th-Gallai-Roman-Strong} and \ref{PropKlostermeyer} we derive the following result on the strong differential. 

\begin{theorem}
If $G$ is a connected graph with $\n(G)\ge  3$, then  $$\partial_s(G)\geq \frac{1}{4}\n(G).$$
Furthermore, if  $G$ is a connected graph with $\delta(G)\ge  2$, then  $$\partial_s(G)\geq \frac{1}{3}\n(G),$$
while if $\delta(G)\ge  3$, then  $$\partial_s(G)\geq \frac{1}{2}\n(G).$$
\end{theorem}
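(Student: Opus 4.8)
The plan is to treat this statement as an immediate corollary of the Gallai-type identity combined with the cited upper bounds on the Italian domination number. The key observation is that Theorem \ref{Th-Gallai-Roman-Strong} can be rearranged to express the strong differential as $\partial_s(G)=\n(G)-\gamma_{_I}(G)$, so that every upper bound on $\gamma_{_I}(G)$ converts directly into a lower bound on $\partial_s(G)$. Thus the entire argument reduces to three short substitutions, one for each regime of minimum degree.

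First I would substitute the first inequality of Theorem \ref{PropKlostermeyer}, valid for any connected graph with $\n(G)\ge 3$, namely $\gamma_{_I}(G)\le \frac{3}{4}\n(G)$, into the identity above; this yields $\partial_s(G)=\n(G)-\gamma_{_I}(G)\ge \n(G)-\frac{3}{4}\n(G)=\frac{1}{4}\n(G)$. The two refinements follow the same pattern. Under the additional hypothesis $\delta(G)\ge 2$ I would use $\gamma_{_I}(G)\le \frac{2}{3}\n(G)$ to obtain $\partial_s(G)\ge \frac{1}{3}\n(G)$, and under $\delta(G)\ge 3$ I would use $\gamma_{_I}(G)\le \frac{1}{2}\n(G)$ to obtain $\partial_s(G)\ge \frac{1}{2}\n(G)$.

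There is no genuine obstacle here, since the substance of the argument resides entirely in Theorem \ref{Th-Gallai-Roman-Strong} and in the bounds of Klostermeyer et al.\ recalled in Theorem \ref{PropKlostermeyer}. The only point requiring the faintest attention is the bookkeeping of hypotheses: each of the three conclusions inherits exactly the connectivity and minimum-degree assumptions of the corresponding upper bound on $\gamma_{_I}(G)$, so one should match these assumptions carefully when separating the three cases. Accordingly, I expect the written proof to consist of little more than the displayed identity followed by the three one-line estimates.
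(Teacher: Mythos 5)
Your proposal is correct and coincides with the paper's own derivation: the theorem is stated there as an immediate consequence of the Gallai-type identity $\gamma_{_I}(G)+\partial_s(G)=\n(G)$ (Theorem \ref{Th-Gallai-Roman-Strong}) combined with the three upper bounds on $\gamma_{_I}(G)$ from Theorem \ref{PropKlostermeyer}. Nothing further is needed.
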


The reader is referred to \cite{Haynes2020GraphsWL} for a characterization of all graphs achieving the equalities in the bounds above. 

%-----------------------New----------------------------
\subsection{Bounds in terms of order, domination number,  $2$-domination number and number of support vertices}

The following results show some basic property of the Italian domination number. Recall that a nontrivial tree is a
tree of order at least $2$.

\begin{theorem}\label{LowerBoundTrees} 
\mbox{ }

\begin{itemize}
\item {\rm \cite{Klostermeyer201920}}  If $T$ is a nontrivial tree, then $\gamma_{_I}(T) \ge  \gamma(T)+1.$

\item {\rm \cite{CHELLALI201622}} For every graph $G$, $\gamma_{_I}(G) \le  \gamma_{_R}(G) \le  2\gamma(G).$

\item {\rm \cite{CHELLALI201622}} For every graph $G$, $\gamma_{_I}(G) \le  \gamma_{_2}(G).$
\end{itemize}

\end{theorem}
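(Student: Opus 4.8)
The three parts are classical facts drawn from the cited works; I sketch a self-contained argument for each. The plan is to dispatch the two upper-bound chains first, since both follow from direct weight assignments, and then to concentrate on the tree lower bound, which is the only delicate point.

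For the chain $\gamma_{_I}(G)\le\gamma_{_R}(G)\le 2\gamma(G)$, I would first observe that every RDF is in particular an IDF: if $f(V_0,V_1,V_2)$ is an RDF, then each $v\in V_0$ has a neighbour in $V_2$, whence $f(N(v))\ge 2$. Since minimising the weight over the larger family of IDFs can only make it smaller, we get $\gamma_{_I}(G)\le\gamma_{_R}(G)$. For the second inequality I would take a $\gamma(G)$-set $S$ and assign weight $2$ to each vertex of $S$ and $0$ elsewhere; because $S$ dominates, this is an RDF of weight $2\gamma(G)$, giving $\gamma_{_R}(G)\le 2\gamma(G)$. The inequality $\gamma_{_I}(G)\le\gamma_{_2}(G)$ is handled in the same spirit: taking a minimum $2$-dominating set $S$ and assigning weight $1$ to its vertices and $0$ elsewhere yields a function under which every vertex outside $S$ has at least two neighbours of weight $1$, i.e.\ an IDF of weight $|S|=\gamma_{_2}(G)$.

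For the tree bound $\gamma_{_I}(T)\ge\gamma(T)+1$, I would start from a $\gamma_{_I}(T)$-function $f(V_0,V_1,V_2)$. Since $f$ is an IDF, every vertex of $V_0$ has a neighbour of positive weight, so $D=V_1\cup V_2$ is a dominating set and hence $|V_1|+|V_2|\ge\gamma(T)$. Writing $\omega(f)=|V_1|+2|V_2|=(|V_1|+|V_2|)+|V_2|\ge\gamma(T)+|V_2|$ settles the case $V_2\ne\varnothing$ immediately.

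The main obstacle is the remaining case $V_2=\varnothing$, in which $V_1$ is precisely a $2$-dominating set and $\omega(f)=|V_1|\ge\gamma_{_2}(T)$; here one must genuinely use that $T$ is a tree, and it suffices to prove $\gamma_{_2}(T)\ge\gamma(T)+1$ for nontrivial trees. My plan is a short counting argument: for any $2$-dominating set $S$ of $T$, each of the $\n(T)-|S|$ vertices outside $S$ sends at least two edges into $S$, so the number of edges of $T$ is at least $2(\n(T)-|S|)$; since a tree has exactly $\n(T)-1$ edges, this forces $2|S|\ge\n(T)+1$, whence $\gamma_{_2}(T)\ge(\n(T)+1)/2$. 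Combining this with the classical bound $\gamma(T)\le\n(T)/2$ for graphs without isolated vertices, and then invoking integrality, yields $\gamma_{_2}(T)\ge\gamma(T)+1$, which completes the proof. The one point to verify carefully in the counting step is that there is no overcounting, namely that each edge joining $S$ to its complement is charged only to its unique endpoint outside $S$, and that the edges lying entirely inside $S$ or entirely outside $S$ are simply discarded from the lower bound.
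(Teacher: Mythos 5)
The paper does not prove this theorem: all three items are quoted from \cite{Klostermeyer201920} and \cite{CHELLALI201622}, so there is no in-paper argument to compare yours against. Your self-contained proof is correct. The two upper bounds are the standard observations (every RDF is an IDF; weight $2$ on a $\gamma(G)$-set is an RDF; weight $1$ on a $\gamma_{_2}(G)$-set is an IDF), and they check out. For the tree bound, your split on whether $V_2=\varnothing$ is clean: the case $V_2\ne\varnothing$ follows from $V_1\cup V_2$ being dominating, and the case $V_2=\varnothing$ correctly reduces to showing $\gamma_{_2}(T)\ge\gamma(T)+1$. Your edge count there is sound: the edges joining $V(T)\setminus S$ to $S$ are pairwise distinct when charged to their unique endpoint outside $S$, giving $\n(T)-1\ge 2(\n(T)-|S|)$ and hence $\gamma_{_2}(T)\ge(\n(T)+1)/2$; combined with Ore's bound $\gamma(T)\le\n(T)/2$ (valid since a nontrivial tree has no isolated vertices) and integrality, this yields the strict gap. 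This route, via the intermediate inequality $\gamma_{_2}(T)\ge\gamma(T)+1$, is arguably more informative than a direct induction on trees, since it isolates exactly where the tree structure (the edge count $\n(T)-1$) enters; its only cost is the reliance on the two auxiliary classical bounds, both of which you invoke correctly.
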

The trees satisfying $\gamma_{_I}(T)= \gamma(T)+1$, and the trees satisfying $\gamma_{_I}(T)= 2\gamma(T)$, were characterized in \cite{HENNING2017557}.
 
From Theorems \ref{Th-Gallai-Roman-Strong} and \ref{LowerBoundTrees} we conclude that if $T$ is a nontrivial tree, then $\partial_s(T) \le n(T)- \gamma(T)-1.$ As the next result shows, we can state a more general bound, which improves the previous one for some classes of trees.  
Here $\sigma(G)$ denotes the number of support vertices of $G$ which are adjacent to more than one leaf.

\begin{theorem}\label{Prop-RElat-Differentials}
For any graph $G$, 
$$ \n(G)-\min\{2\gamma(G), \gamma_{_2}(G)  \} \le \partial_s(G)\le \n(G)-\gamma(G)-\sigma(G).$$ 
\end{theorem}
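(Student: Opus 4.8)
The plan is to convert the statement into one about the Italian domination number by means of the Gallai-type identity $\partial_s(G)=\n(G)-\gamma_{_I}(G)$ furnished by Theorem \ref{Th-Gallai-Roman-Strong}. Under this identity the claimed double inequality is equivalent to
$$\gamma(G)+\sigma(G)\le \gamma_{_I}(G)\le \min\{2\gamma(G),\gamma_{_2}(G)\},$$
so it suffices to prove these two bounds on $\gamma_{_I}(G)$. The right-hand inequality, which yields the lower bound $\partial_s(G)\ge \n(G)-\min\{2\gamma(G),\gamma_{_2}(G)\}$, is immediate: Theorem \ref{LowerBoundTrees} gives both $\gamma_{_I}(G)\le \gamma_{_R}(G)\le 2\gamma(G)$ and $\gamma_{_I}(G)\le \gamma_{_2}(G)$, whence $\gamma_{_I}(G)\le \min\{2\gamma(G),\gamma_{_2}(G)\}$.

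For the left-hand inequality (equivalently, the upper bound on $\partial_s(G)$) I would choose, among all $\gamma_{_I}(G)$-functions $f(V_0,V_1,V_2)$, one for which $|V_2|$ is maximum. Since every $v\in V_0$ satisfies $f(N(v))\ge 2$, it has a neighbour of positive weight, so $V_1\cup V_2$ is a dominating set and $\gamma(G)\le |V_1|+|V_2|$. Consequently
$$\gamma_{_I}(G)=|V_1|+2|V_2|=(|V_1|+|V_2|)+|V_2|\ge \gamma(G)+|V_2|,$$
and it remains to show $|V_2|\ge \sigma(G)$, for which I would prove that every support vertex adjacent to at least two leaves lies in $V_2$.

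To prove this claim, fix a support vertex $s$ with leaf-neighbours $\ell_1,\dots,\ell_k$, $k\ge 2$; as $N(\ell_i)=\{s\}$, a leaf placed in $V_0$ forces $f(s)=2$. If $s\in V_1$, then no leaf lies in $V_0$, so the weight on $\{s,\ell_1,\dots,\ell_k\}$ is at least $1+k\ge 3$, and reassigning $f(s)=2$ and $f(\ell_i)=0$ gives an IDF of strictly smaller weight, contradicting minimality. If $s\in V_0$, then again no leaf lies in $V_0$, the weight on the leaves is at least $k\ge 2$, and the same reassignment gives an IDF of weight no larger but with $|V_2|$ larger by one, contradicting the choice of $f$. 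Hence $s\in V_2$. The reassignment preserves the IDF property because the leaves have no neighbour other than $s$ and $f(s)$ is only increased, so only neighbours of $s$ see their neighbourhood weight change, and it increases; moreover the sets $\{s\}\cup\{\ell_1,\dots,\ell_k\}$ are pairwise disjoint over distinct support vertices, so the modifications do not interfere. Therefore all $\sigma(G)$ such support vertices lie in $V_2$, giving $|V_2|\ge \sigma(G)$ and $\gamma_{_I}(G)\ge \gamma(G)+\sigma(G)$.

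The bookkeeping inequalities are routine, so the main obstacle is precisely this local exchange argument: one must verify that reassigning the weights on a multi-leaf support vertex and its leaves keeps $f$ an Italian dominating function and never increases $\omega(f)$, and that maximizing $|V_2|$ is exactly what forces the borderline configuration (namely $k=2$ with both leaves of weight one, where the weight is merely preserved rather than strictly decreased) into $V_2$. A direct alternative, avoiding the Gallai identity, would instead take a dominating $\partial_s(G)$-set $D$ via Lemma \ref{Strong-dif-dominante} and show $|D_w|\ge\sigma(G)$ through the identification $D_w\leftrightarrow V_2$, $D_s\leftrightarrow V_1$; this is the same argument in the language of Remark \ref{eq-2}.
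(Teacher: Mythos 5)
Your proof is correct, and the lower bound is obtained exactly as in the paper (Theorem \ref{Th-Gallai-Roman-Strong} combined with $\gamma_{_I}(G)\le \gamma_{_R}(G)\le 2\gamma(G)$ and $\gamma_{_I}(G)\le\gamma_{_2}(G)$). For the upper bound, however, you take a genuinely different route. The paper stays entirely in the language of sets: it takes a dominating $\partial_s(G)$-set $D$ from Lemma \ref{Strong-dif-dominante}, replaces the leaves hanging off each support vertex $x$ with $|\mathcal{L}(x)|\ge 2$ by $x$ itself to form a new dominating set $D'$, checks that $|D'|+|D'_w|\le |D|+|D_w|$ while $\gamma(G)\le|D'|$ and $\sigma(G)\le|D'_w|$, and concludes via Remark \ref{eq-2}. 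You instead pass through the Gallai identity and prove $\gamma_{_I}(G)\ge\gamma(G)+\sigma(G)$ directly by a local exchange on a $\gamma_{_I}(G)$-function chosen to maximize $|V_2|$; your case analysis (strict weight decrease when $k\ge 3$ or some leaf has weight $2$, versus the borderline $k=2$ with both leaves of weight one where only the secondary extremality condition bites) is sound, and the IDF property is preserved for the reasons you give, since leaves have no neighbour other than $s$ and $f(s)$ only increases. The two arguments are dual: your $|V_2|\ge\sigma(G)$ is the function-language counterpart of the paper's $|D'_w|\ge\sigma(G)$. What your version buys is a self-contained proof of the inequality $\gamma_{_I}(G)\ge\gamma(G)+\sigma(G)$ that the paper only records afterwards in its concluding table; what the paper's version buys is consistency with its stated program of handling Italian domination without functions, and a one-shot global modification that needs no secondary extremal choice. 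One caution about your closing aside: the ``direct alternative'' of showing $|D_w|\ge\sigma(G)$ for a dominating $\partial_s(G)$-set does not work verbatim (for $P_3$ the two leaves form a dominating $\partial_s$-set with $D_w=\varnothing$ while $\sigma=1$); the paper needs the modified set $D'$ and the combined inequality $|D'|+|D'_w|\le|D|+|D_w|$ precisely for this reason. Since that remark is not part of your actual argument, it does not affect correctness.
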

\begin{proof}
By Lemma \ref{Strong-dif-dominante}, there exists  $D\in \mathcal{D}(G)$ which is  a $\partial_s(G)$-set. 
For every $x\in \mathcal{S}(G)$ we define $\mathcal{L}(x)=N(x)\cap \mathcal{L}(G)$ and $\mathcal{L}[x]=\{x\}\cup \mathcal{L}(x)$.  Let $\mathcal{S}'(G)=\{x\in \mathcal{S}(G): \, |\mathcal{L}(x)|\ge 2\}$ and $D'=\mathcal{S}'(G)\cup D\setminus(\cup_{x\in \mathcal{S}'(G)}\mathcal{L}(x))$. 
 Notice that $D'$ is a dominating set of $G$ and for any  $x\in \mathcal{S}'(G)$, either $|D\cap \mathcal{L}[x]|=1$ and $x\in D_w$ or $|D\cap \mathcal{L}[x]|\ge 2$, while $|D'\cap \mathcal{L}[x]|=1$ and $x\in D'_w$.  Hence, $\gamma(G)\le |D'|\le |D|$, $\sigma(G)\le |D'_w|$ and $|D'|+|D'_w|\le |D|+|D_w|$. 
 Therefore, by Remark \ref{eq-2}, $$\partial_s(G)= \n(G)-|D|-|D_w|\le\n(G)-|D'|-|D'_w|\le \n(G)-\gamma(G)-\sigma(G).$$

Finally, the lower bounds are derived from Theorems \ref{Th-Gallai-Roman-Strong} and \ref{LowerBoundTrees}.
\end{proof}

We next present some classes of graphs for which the bounds above are achieved. To begin with, we consider the following result, which  is an immediate consequence of Theorem~\ref{Prop-RElat-Differentials}.
 
 \begin{corollary}\label{Corollary=gamma-gamma2}
 Let $G$ be a graph. If $\gamma_{_2}(G)=\gamma(G)$, then $\partial_s(G)=\n(G)-\gamma_{_2}(G)$.
 \end{corollary}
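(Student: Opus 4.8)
The plan is to sandwich the quantity $\partial_s(G)$ between two equal bounds, both supplied by Theorem~\ref{Prop-RElat-Differentials}, invoking the hypothesis $\gamma_{_2}(G)=\gamma(G)$ to force the collapse. First I would record what Theorem~\ref{Prop-RElat-Differentials} gives under this hypothesis. The lower bound there reads $\partial_s(G)\ge \n(G)-\min\{2\gamma(G),\gamma_{_2}(G)\}$. Since $\gamma_{_2}(G)=\gamma(G)\le 2\gamma(G)$, the minimum is realised by $\gamma_{_2}(G)$, so the lower bound simplifies to $\partial_s(G)\ge \n(G)-\gamma_{_2}(G)$.

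Next I would address the upper bound. The general upper bound is $\partial_s(G)\le \n(G)-\gamma(G)-\sigma(G)$. Using $\gamma(G)=\gamma_{_2}(G)$ and the trivial fact $\sigma(G)\ge 0$, this yields $\partial_s(G)\le \n(G)-\gamma(G)-\sigma(G)\le \n(G)-\gamma(G)=\n(G)-\gamma_{_2}(G)$. Combining the two displayed inequalities gives $\n(G)-\gamma_{_2}(G)\le \partial_s(G)\le \n(G)-\gamma_{_2}(G)$, whence equality $\partial_s(G)=\n(G)-\gamma_{_2}(G)$ follows immediately.

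There is really no substantial obstacle here, since the corollary is advertised as an immediate consequence: the only point requiring a moment's care is verifying that the hypothesis $\gamma_{_2}(G)=\gamma(G)$ is exactly what is needed to resolve the $\min$ in the lower bound in favour of $\gamma_{_2}(G)$, and simultaneously to rewrite the dominating term $\gamma(G)$ in the upper bound as $\gamma_{_2}(G)$. The discarding of $\sigma(G)\ge 0$ is harmless for the upper bound because we only need an inequality in the direction $\le \n(G)-\gamma_{_2}(G)$. One could, as a sanity check, note that the argument forces $\sigma(G)=0$ whenever $\gamma_{_2}(G)=\gamma(G)$, though this observation is not required for the proof and I would not belabour it.
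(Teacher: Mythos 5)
Your argument is correct and is exactly the paper's intended derivation: the paper states the corollary as an immediate consequence of Theorem~\ref{Prop-RElat-Differentials}, and your resolution of the $\min$ in the lower bound together with the discarding of $\sigma(G)\ge 0$ in the upper bound is precisely that deduction. Nothing further is needed.
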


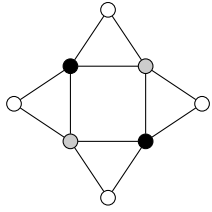
\begin{figure}[ht]
\centering
\begin{tikzpicture}[scale=.5, transform shape]

\node [draw, shape=circle, fill=gray!40] (a1) at  (0,0) {};
\node [draw, shape=circle, fill=black] (a2) at  (2,0) {};
\node [draw, shape=circle, fill=gray!40] (a3) at  (2,2) {};
\node [draw, shape=circle, fill=black] (a4) at  (0,2) {};

\node [draw, shape=circle] (a12) at  (1,-1.5) {};
\node [draw, shape=circle] (a23) at  (3.5,1) {};
\node [draw, shape=circle] (a34) at  (1,3.5) {};
\node [draw, shape=circle] (a41) at  (-1.5,1) {};

%\node [draw, shape=circle] (a1122) at  (1,-3) {};
%\node [draw, shape=circle] (a2233) at  (5,1) {};
%\node [draw, shape=circle] (a3344) at  (1,5) {};
%\node [draw, shape=circle] (a4411) at  (-3,1) {};

\draw(a1)--(a2)--(a3)--(a4)--(a1);
\draw(a1)--(a12)--(a2)--(a23)--(a3)--(a34)--(a4)--(a41)--(a1);
%\draw(a1)--(a1122)--(a2)--(a2233)--(a3)--(a3344)--(a4)--(a4411)--(a1);

\end{tikzpicture}
\caption{The set of (gray and black) coloured vertices forms a $\partial_s(G)$-set and a $\gamma_{_2}(G)$-set, while  the set of black-coloured vertices forms a $\gamma(G)$-set.}
\label{Fig-Example-1} 
\end{figure}

The converse of Corollary \ref{Corollary=gamma-gamma2} does not hold. For instance, for the graph $G$ shown in  Figure~\ref{Fig-Example-1} we have that $\partial_s(G)=4=\n(G)-\gamma_{_2}(G)$, while $\gamma_{_2}(G)=4>2=\gamma(G)$.

 Now, if  $\mathcal{G}$ is the family of graphs of order at least three, where $\{\mathcal{L}(G),\mathcal{S}(G)\}$ is a partition of $V(G)$ and every support vertex is  adjacent to at least two leaves, then for every $G\in \mathcal{G}$ we have that $\gamma(G)=|\mathcal{S}(G)|=\sigma(G)$. In such a case,  the only  $\gamma(G)$-set is $\mathcal{S}(G)$, and we have that $\partial_s(G)\ge \partial_s(S)= |\mathcal{L}(G)|-|\mathcal{S}(G)|=\n(G)-2\gamma(G)$. Therefore, Theorem \ref{Prop-RElat-Differentials}  leads  to the following remark. 

\begin{remark} If $G\in \mathcal{G}$, then  $\partial_s(G)=\partial(G)=\n(G)-2\gamma(G)=\n(G)-\gamma(G)-\sigma(G)$.
\end{remark}

Next we proceed to discuss the case of graphs with $\partial_s(G)=\n(G)-\gamma(G)$.

 \begin{proposition}\label{CharactEqGamma}
 Given a graph $G$, the following statements are equivalent.
 \begin{enumerate}[{\rm (i)}]
 \item $\partial_s(G)=\n(G)-\gamma(G)$.
\item  $\gamma_{_2}(G)=\gamma(G)$.
 \end{enumerate} 
 \end{proposition}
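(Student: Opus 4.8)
The plan is to pass from the strong differential to the Italian domination number via the Gallai-type identity of Theorem \ref{Th-Gallai-Roman-Strong}, and then to argue entirely in terms of dominating and $2$-dominating sets. Since $\gamma_{_I}(G)+\partial_s(G)=\n(G)$, statement (i) is equivalent to the condition $\gamma_{_I}(G)=\gamma(G)$, so throughout I would replace (i) by this reformulation.

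For the implication (ii) $\Rightarrow$ (i) there is essentially nothing to do: if $\gamma_{_2}(G)=\gamma(G)$, then Corollary \ref{Corollary=gamma-gamma2} gives directly $\partial_s(G)=\n(G)-\gamma_{_2}(G)=\n(G)-\gamma(G)$, which is (i).

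For the converse (i) $\Rightarrow$ (ii), I would start from $\gamma_{_I}(G)=\gamma(G)$ and fix a $\gamma_{_I}(G)$-function $f(V_0,V_1,V_2)$. The key step is a weight-counting argument: since $V_1\cup V_2$ is a dominating set (every $v\in V_0$ has $f(N(v))\ge 2$, hence a neighbour in $V_1\cup V_2$), we have $\gamma(G)\le |V_1|+|V_2|$, whereas $\gamma(G)=\omega(f)=|V_1|+2|V_2|$. Comparing these two expressions forces $|V_2|\le 0$, i.e. $V_2=\varnothing$. This is the crux of the argument and the step I expect to carry the most weight. Once $V_2=\varnothing$ is established, the Italian condition on each $v\in V_0$ reduces to $|N(v)\cap V_1|\ge 2$, so $V_1$ is a $2$-dominating set of cardinality $\omega(f)=\gamma(G)$; hence $\gamma_{_2}(G)\le\gamma(G)$. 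Combined with the trivial inequality $\gamma(G)\le\gamma_{_2}(G)$, this yields $\gamma_{_2}(G)=\gamma(G)$, which is (ii).

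The main obstacle is really conceptual rather than technical: recognizing that the equality $\gamma_{_I}(G)=\gamma(G)$ leaves no ``room'' for a vertex of weight $2$, since such a vertex would make the dominating set $V_1\cup V_2$ strictly cheaper than the weight $\omega(f)$. After that observation the remaining verifications (that $V_1\cup V_2$ dominates, that $V_1$ is $2$-dominating) are routine checks straight from the definitions, and $\gamma(G)\le\gamma_{_2}(G)$ holds because any $2$-dominating set is in particular a dominating set.
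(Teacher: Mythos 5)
Your proof is correct. The forward direction (ii) $\Rightarrow$ (i) is exactly the paper's (both just cite the lower bound $\partial_s(G)\ge \n(G)-\gamma_{_2}(G)$ from Theorem \ref{Prop-RElat-Differentials}). For the converse you take a genuinely different, though parallel, route: the paper stays entirely on the differential side, taking a dominating $\partial_s(G)$-set $D$ (Lemma \ref{Strong-dif-dominante}) and using Remark \ref{eq-2} to write $\partial_s(G)=\n(G)-|D|-|D_w|\le \n(G)-\gamma(G)$, so that equality forces $|D|=\gamma(G)$ and $D_w=\varnothing$, making $D$ a $2$-dominating set of cardinality $\gamma(G)$; you instead push everything through Theorem \ref{Th-Gallai-Roman-Strong} to the reformulation $\gamma_{_I}(G)=\gamma(G)$ and run the analogous count on a $\gamma_{_I}(G)$-function, where $\gamma(G)\le |V_1|+|V_2|\le |V_1|+2|V_2|=\gamma(G)$ forces $V_2=\varnothing$ and makes $V_1$ a $2$-dominating set of cardinality $\gamma(G)$. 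Under the dictionary $D=V_1\cup V_2$, $D_w=V_2$, $D_s=V_1$ established in the proof of the Gallai-type theorem, the two counting arguments are the same inequality in different clothing; what your version buys is independence from Lemma \ref{Strong-dif-dominante} and Remark \ref{eq-2} (at the cost of invoking Theorem \ref{Th-Gallai-Roman-Strong}, whose proof uses them), and it illustrates nicely the translation between the two theories that the paper advertises. All the individual steps you flag as routine (that $V_1\cup V_2$ dominates, that $V_1$ is $2$-dominating once $V_2=\varnothing$, and that $\gamma(G)\le\gamma_{_2}(G)$) are indeed immediate from the definitions.
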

 
\begin{proof}
First, assume $\partial_s(G)=\n(G)-\gamma(G)$.  
By Lemma \ref{Strong-dif-dominante}, there exists  $D\in \mathcal{D}(G)$ which is  a $\partial_s(G)$-set. 
Since $\gamma(G)\le |D|$, Remark \ref{eq-2} leads to 
$
\partial_s(G)=\n(G)-|D|-|D_w|\le \n(G)-\gamma(G).
$
 Hence, $|D|=\gamma(G)$ and $|D_w|=0$, which implies that $D=D_s$ and so $D$ is also a $2$-dominating set of $G$. Therefore, $\gamma_{_2}(G)\leq |D|=\gamma(G)\leq \gamma_{_2}(G)$, which leads to $\gamma_{_2}(G)=\gamma(G)$.  

Conversely, if $\gamma_{_2}(G)=\gamma(G)$, then  Theorem \ref{Prop-RElat-Differentials} leads to $\partial_s(G)=\n(G)-\gamma(G)$, which completes the proof. 
\end{proof}

Our next result describes the structure of some particular  $\partial_s(G)$-sets for graphs with  $\partial_s(G)=\n(G)-\gamma_{_2}(G)$.

\begin{proposition}\label{CondNecSufEqualityDifs} 
Given a  graph $G$,  the following statements are equivalent.
 \begin{enumerate}[{\rm (i)}]
 \item $\partial_s(G)=\n(G)-\gamma_{_2}(G)$. 
 \item There exists a $\partial_s(G)$-set $D$ which is a dominating set and $D_w=\varnothing$.
 \end{enumerate}
\end{proposition}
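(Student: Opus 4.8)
The plan is to translate condition (ii) into a statement purely about $2$-domination and then close the argument using Remark~\ref{eq-2} together with the lower bound of Theorem~\ref{Prop-RElat-Differentials}. The conceptual heart of the proof is the following dictionary, which I would establish first: for a set $D\subseteq V(G)$, the set $D$ is a $2$-dominating set if and only if $D$ is a dominating set with $D_w=\varnothing$. Indeed, $D\in\mathcal{D}(G)$ means $|N(u)\cap D|\ge 1$ for every $u\in V(G)\setminus D$, whereas $D_w=\varnothing$ means that no vertex $v\in D$ has $\epn(v,D)\ne\varnothing$, i.e.\ no $u\in V(G)\setminus D$ satisfies $|N(u)\cap D|=1$; combining these two facts is exactly the requirement $|N(u)\cap D|\ge 2$ for all $u\in V(G)\setminus D$, which is $2$-domination. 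This equivalence is routine set-chasing, and I expect it to be the only genuinely new observation the proof needs.

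Next I would prove (ii) $\Rightarrow$ (i). Given a $\partial_s(G)$-set $D$ that is dominating with $D_w=\varnothing$, the dictionary shows $D$ is $2$-dominating, hence $|D|\ge\gamma_{_2}(G)$. Since $D\in\mathcal{D}(G)$ is a $\partial_s(G)$-set, Remark~\ref{eq-2} gives $\partial_s(G)=\n(G)-|D|-|D_w|=\n(G)-|D|\le\n(G)-\gamma_{_2}(G)$. On the other hand, the lower bound of Theorem~\ref{Prop-RElat-Differentials} yields $\partial_s(G)\ge\n(G)-\min\{2\gamma(G),\gamma_{_2}(G)\}\ge\n(G)-\gamma_{_2}(G)$. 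The two inequalities force equality, establishing (i).

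For (i) $\Rightarrow$ (ii) I would exhibit the desired set directly: let $S$ be a $\gamma_{_2}(G)$-set, so $|S|=\gamma_{_2}(G)$ and $S$ is $2$-dominating. By the dictionary, $S$ is a dominating set with $S_w=\varnothing$, whence $N_e(S)=V(G)\setminus S$ and $\partial_s(S)=|N_e(S)|-|S_w|=(\n(G)-|S|)-0=\n(G)-\gamma_{_2}(G)$. Assumption (i) says this common value equals $\partial_s(G)$, so $\partial_s(S)=\partial_s(G)$ and $S$ is a $\partial_s(G)$-set; being dominating with $S_w=\varnothing$, it is precisely the set required in (ii).

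In short, once the $2$-domination dictionary is in place the rest is bookkeeping: Remark~\ref{eq-2} converts cardinalities into values of $\partial_s$, and the lower bound of Theorem~\ref{Prop-RElat-Differentials} supplies the matching inequality. I do not foresee a serious obstacle; the only points demanding care are (a) verifying the dictionary in both directions, and (b) noting that in each implication the $\partial_s(G)$-set in play is already dominating, so that Remark~\ref{eq-2} applies without appealing to Lemma~\ref{Strong-dif-dominante}.
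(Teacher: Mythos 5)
Your proof is correct and follows essentially the same route as the paper: both directions rest on the observation that a dominating set with $D_w=\varnothing$ is exactly a $2$-dominating set, with (ii)$\Rightarrow$(i) closed by the lower bound of Theorem~\ref{Prop-RElat-Differentials} and (i)$\Rightarrow$(ii) witnessed by a $\gamma_{_2}(G)$-set. Your only addition is to state the ``dictionary'' explicitly, which the paper leaves implicit.
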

 
\begin{proof} If there exists a $\partial_s(G)$-set $D$ which is a dominating set with $D_w=\varnothing$, then $D$ is a $2$-dominating set. Hence, $\partial_s(G)=\partial_s(D)=|N_e(D)|=\n(G)-|D|\leq\n(G)-\gamma_{_2}(G)$. By Theorem~\ref{Prop-RElat-Differentials} we deduce that $\partial_s(G)=\n(G)-\gamma_{_2}(G)$. 

Conversely, assume   $\partial_s(G)=\n(G)-\gamma_{_2}(G)$. For any $\gamma_{_2}(G)$-set $X$ we have that $X_w=\varnothing$, and so $\partial_s(G)=\n(G)-\gamma_{_2}(G)=|N_e(X)|=|N_e(X)|-|X_w|=\partial_s(X)$. This implies that $X$ is a $\partial_s(G)$-set satisfying the required conditions,    which completes the proof.   
\end{proof}

Next we consider the case of graphs with $\partial_s(G)=\n(G)-\gamma_{_2}(G)$.

%%%%%%%
\begin{proposition}\label{CondNecSufEquality3} 
Given a  graph $G$,  the following statements are equivalent.
 \begin{enumerate}[{\rm (i)}]
 \item $\partial_s(G)=\n(G)-2\gamma(G)$. 
 \item Every $\gamma(G)$-set $D$ is a $\partial_s(G)$-set with  $D_s=\varnothing$.
 \end{enumerate}
\end{proposition}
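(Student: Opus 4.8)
The plan is to prove the two implications separately, relying throughout on Remark~\ref{eq-2} together with the elementary observation that a dominating set $D$ satisfies $N_e(D)=V(G)\setminus D$, and hence $|N_e(D)|=\n(G)-|D|$.

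For the implication (ii)$\Rightarrow$(i), I would fix any $\gamma(G)$-set $D$, which exists by definition of $\gamma(G)$. By hypothesis $D$ is a $\partial_s(G)$-set with $D_s=\varnothing$, so $D_w=D$ and therefore $|D_w|=|D|=\gamma(G)$. Since $D$ is in particular a dominating $\partial_s(G)$-set, Remark~\ref{eq-2} yields $\partial_s(G)=\n(G)-|D|-|D_w|=\n(G)-2\gamma(G)$, which is exactly statement (i).

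For the converse (i)$\Rightarrow$(ii), I would start from an \emph{arbitrary} $\gamma(G)$-set $D$ and establish both required properties at once. Because $D$ dominates $G$, we have $|N_e(D)|=\n(G)-\gamma(G)$, so $\partial_s(D)=\n(G)-\gamma(G)-|D_w|$. Combining the maximality inequality $\partial_s(D)\le\partial_s(G)=\n(G)-2\gamma(G)$ with the trivial containment $D_w\subseteq D$ forces $|D_w|\ge\gamma(G)=|D|$, and hence $D_w=D$, i.e.\ $D_s=\varnothing$. Substituting $|D_w|=\gamma(G)$ back into the expression for $\partial_s(D)$ gives $\partial_s(D)=\n(G)-2\gamma(G)=\partial_s(G)$, so $D$ is indeed a $\partial_s(G)$-set. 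As $D$ was an arbitrary minimum dominating set, every $\gamma(G)$-set enjoys both properties.

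The calculation is routine in both directions; the only point demanding care is that (i)$\Rightarrow$(ii) must be verified for \emph{every} $\gamma(G)$-set, not merely for one distinguished set. The single inequality $\partial_s(D)\le\partial_s(G)$ is what supplies this uniformity, since it applies verbatim to each minimum dominating set. Consequently no case analysis is needed, and in this direction one does not even need to invoke Lemma~\ref{Strong-dif-dominante}, as the minimum dominating set itself already plays the role of the $\partial_s(G)$-set.
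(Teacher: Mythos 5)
Your proof is correct and follows essentially the same route as the paper: both directions reduce to the identity $|N_e(D)|=\n(G)-\gamma(G)$ for a minimum dominating set together with the maximality inequality $\partial_s(D)\le\partial_s(G)$, which forces $D_s=\varnothing$ (equivalently $|D_w|=|D|$) in the forward direction. The only cosmetic difference is that the paper writes the slack as $+|D_s|$ on the left-hand side while you phrase it as $|D_w|\ge|D|$; these are the same computation.
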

 
\begin{proof}
First, assume $\partial_s(G)=\n(G)-2\gamma(G)$, and let $D$ be a $\gamma(G)$-set. 
Since $|N_e(D)|=\n(G)-\gamma(G)$, we have that
$\n(G)-2\gamma(G)+|D_s|=|N_e(D)|-|D|+|D_s|=\partial_s(D) \le \partial_s(G)=\n(G)-2\gamma(G),$ which implies that $D_s=\varnothing$ and $\partial_s(D) = \partial_s(G)$.

Conversely, if very $\gamma(G)$-set $D$ is a $\partial_s(G)$-set and  $D_s=\varnothing$, then 
$\partial_s(G) = \partial_s(D)=|N_e(D)|-|D|=\n(G)-2\gamma(G)$.
\end{proof}

%%%%%
By Theorem \ref{Prop-RElat-Differentials} and Proposition  
\ref{CondNecSufEquality3}  we deduce the following result.

\begin{proposition} 
If there exists a $\gamma(G)$-set $D$ such that  $D_s\ne \varnothing$, then $$\partial_s(G)\ge \n(G)-2\gamma(G)+1.$$ 
\end{proposition}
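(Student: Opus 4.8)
The plan is to argue by contradiction, leveraging exactly the two results cited: the universal lower bound from Theorem~\ref{Prop-RElat-Differentials} together with the characterization of equality in Proposition~\ref{CondNecSufEquality3}. First I would record the standing lower bound. Since $\min\{2\gamma(G),\gamma_{_2}(G)\}\le 2\gamma(G)$, the left-hand inequality of Theorem~\ref{Prop-RElat-Differentials} immediately yields
$$\partial_s(G)\ge \n(G)-\min\{2\gamma(G),\gamma_{_2}(G)\}\ge \n(G)-2\gamma(G).$$
So the value $\partial_s(G)$ can only sit at $\n(G)-2\gamma(G)$ or strictly above it, and the entire task reduces to ruling out the boundary case $\partial_s(G)=\n(G)-2\gamma(G)$.

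Next I would invoke the hypothesis. Suppose, for contradiction, that equality holds, i.e.\ $\partial_s(G)=\n(G)-2\gamma(G)$. This is precisely statement (i) of Proposition~\ref{CondNecSufEquality3}, so the implication (i)$\Rightarrow$(ii) forces \emph{every} $\gamma(G)$-set $D$ to be a $\partial_s(G)$-set with $D_s=\varnothing$. But by assumption there exists a $\gamma(G)$-set $D$ with $D_s\ne\varnothing$, contradicting this universally quantified conclusion. Hence the boundary case is impossible and the inequality in the previous paragraph must be strict: $\partial_s(G)>\n(G)-2\gamma(G)$.

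Finally I would close the gap using integrality. Because $\partial_s(G)=|N_e(D)|-|D_w|$ is a difference of cardinalities, and $\n(G)-2\gamma(G)$ is likewise an integer, the strict inequality $\partial_s(G)>\n(G)-2\gamma(G)$ upgrades to $\partial_s(G)\ge \n(G)-2\gamma(G)+1$, which is the claimed bound.

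The argument is essentially a bookkeeping combination of two already-established facts, so there is no serious analytic obstacle; the only points that require care are applying the equivalence of Proposition~\ref{CondNecSufEquality3} in the correct direction (the contrapositive of (i)$\Rightarrow$(ii), namely that a single $\gamma(G)$-set with $D_s\ne\varnothing$ breaks the universal statement (ii)) and explicitly appealing to the integrality of $\partial_s(G)$ to turn the strict inequality into the additive $+1$ improvement rather than leaving it as a mere strict inequality.
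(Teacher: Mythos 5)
Your proof is correct and follows exactly the route the paper intends: the paper states the result as a direct consequence of Theorem~\ref{Prop-RElat-Differentials} and Proposition~\ref{CondNecSufEquality3} without writing out the details, and your argument (lower bound $\partial_s(G)\ge \n(G)-2\gamma(G)$, exclusion of equality via the contrapositive of (i)$\Rightarrow$(ii), then integrality) is precisely that deduction made explicit.
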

%%%

The bound above is tight. For instance, it is achieved by any  graph $G$ of order $r+2$ obtained from a star $K_{1,r}$ by subdividing  one edge. In such a case, $\partial_s(G)=r-1= \n(G)-2\gamma(G)+1.$

\begin{theorem}\label{BoundCover,gamma1}
Let $G$ be a graph. If $\delta(G)\ge 2$, then
$$\partial_s(G)\geq \frac{1}{2}\left(\n(G)-\gamma(G)\right).$$
\end{theorem}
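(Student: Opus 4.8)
The plan is to pass to the Italian domination side via Theorem \ref{Th-Gallai-Roman-Strong}. Since $\partial_s(G)=\n(G)-\gamma_{_I}(G)$, the claimed inequality $\partial_s(G)\ge \tfrac12(\n(G)-\gamma(G))$ is equivalent to
$$\gamma_{_I}(G)\le \frac{\n(G)+\gamma(G)}{2}.$$
So I would exhibit an IDF of weight at most $(\n(G)+\gamma(G))/2$. I start from a $\gamma(G)$-set $S$ and assign weight $1$ to every vertex of $S$; this already protects every vertex of $V(G)\setminus S$ that has at least two neighbours in $S$. The remaining task is to repair the vertices with exactly one neighbour in $S$, and the hypothesis $\delta(G)\ge 2$ is exactly what forces each such vertex to have a neighbour outside $S$ available for the repair.

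Concretely, I would partition $V(G)\setminus S$ into $A$ (vertices with at least two neighbours in $S$) and $B$ (vertices with exactly one neighbour in $S$), so that $|A|+|B|=\n(G)-\gamma(G)$. The goal becomes finding a set $W\subseteq A\cup B$ dominating $B$ with $|W|\le \tfrac12(|A|+|B|)$; then putting weight $1$ on $W$ on top of the weight $1$ on $S$ yields an IDF of weight $\gamma(G)+|W|$, after a short check that every weight-$0$ vertex then receives total weight at least $2$ (a vertex of $A$ via its two $S$-neighbours, a vertex of $B$ via its single $S$-neighbour together with a $W$-neighbour). To build $W$, I split $B=B_1\cup B_2$, where $B_1$ consists of the vertices of $B$ having a neighbour in $B$, and $B_2=B\setminus B_1$. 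Because $\delta(G)\ge 2$, every vertex of $B$ has a neighbour outside $S$, so every vertex of $B_2$ has a neighbour in $A$; in particular $A$ dominates $B_2$. Meanwhile $G[B_1]$ has no isolated vertex, so by the classical bound (Ore, see \cite{Haynes1998}) it admits a dominating set $W_1\subseteq B_1$ with $|W_1|\le |B_1|/2$.

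For $B_2$ I take $W_2$ to be the smaller of $A$ and $B_2$: since $A$ dominates $B_2$ and $B_2$ trivially dominates itself, there is a set dominating $B_2$ of size $\min\{|A|,|B_2|\}\le \tfrac12(|A|+|B_2|)$. Setting $W=W_1\cup W_2$ then dominates all of $B$ with
$$|W|\le \frac{|B_1|}{2}+\frac{|A|+|B_2|}{2}=\frac{|A|+|B|}{2}=\frac{\n(G)-\gamma(G)}{2},$$
which closes the argument. The only genuinely delicate input is this domination step, and the main obstacle is guaranteeing a half-size dominator of $B$; the key realization is that the ``hard'' vertices of $B$ (those with no neighbour in $B$) each need a private helper in $A$, so the cost of dominating them is controlled by $|A|$. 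This is why splitting $B$ according to whether a vertex has a neighbour in $B$ is the right move, and why the $B_2$ part costs only $\min\{|A|,|B_2|\}$ rather than $|B_2|$.
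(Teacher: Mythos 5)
Your proof is correct, and it rests on the same engine as the paper's: start from a $\gamma(G)$-set $S$ and use Ore's half-order domination bound on an isolated-vertex-free induced subgraph to add at most $\tfrac12(\n(G)-\gamma(G))$ further vertices so that every remaining vertex sees two chosen vertices. The differences are in packaging and in the decomposition. You pass to the Italian side via Theorem \ref{Th-Gallai-Roman-Strong} and build an IDF, whereas the paper stays on the differential side and exhibits a dominating $D$ with $D_w=\varnothing$; these are interchangeable. More substantively, the paper removes from $V(G)\setminus S$ only the set $C$ of vertices isolated in $G[V(G)\setminus S]$ (which, by $\delta(G)\ge 2$, already have two neighbours in $S$) and applies Ore to all of $G[V(G)\setminus(S\cup C)]$ in one stroke. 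Your split of $V(G)\setminus S$ into $A$ and $B$ by the number of $S$-neighbours, and then of $B$ into $B_1$ and $B_2$, is finer than necessary: your $B_2$ (vertices with one $S$-neighbour and no $B$-neighbour) is exactly the case that forces your extra $\min\{|A|,|B_2|\}$ device, but these vertices are not isolated in $G[V(G)\setminus S]$ (they have a neighbour in $A$), so in the paper's decomposition they are absorbed into the single application of Ore and the issue never arises. Your version does have the mild virtue of only paying to dominate vertices that actually need a second defender, but both arguments land on the same bound, so the simpler decomposition is preferable here.
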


\begin{proof}
Let $S$ be a $\gamma(G)$-set and $C\subseteq V(G)\setminus S$ the set of isolated vertices of the graph  $G[V(G)\setminus S]$. Assume $\delta(G)\ge 2$. If $V(G)=S\cup C$, then $S_w=\varnothing$ and so
$$\partial_s(G)\geq \partial_s(S)=|N_e(S)|=\n(G)-\gamma(G)\geq \frac{1}{2}(\n(G)-\gamma(G)). $$

Now, assume $V(G)\setminus (S\cup C)\neq \varnothing$. Let $S'$ be a $\gamma(G[V(G)\setminus (S\cup C)])$-set. Since $G[V(G)\setminus (S\cup C)]$ does not have isolated vertices, $|S'|\leq \frac{1}{2}(\n(G)-(|S|+|C|))\leq \frac{1}{2}(\n(G)-\gamma(G))$. Let $D=S\cup S'$. Notice that $D$ is a dominating set of $G$ and $D_w=\varnothing$. Hence,
\begin{align*}
\partial_s(G)&\ge \partial_s(D)\\
             &=|N_e(D)|\\
             &=\n(G)-|D|\\
             &\geq \n(G)-\left(\gamma(G)+\frac{1}{2}(\n(G)-\gamma(G))\right)\\
             &=\frac{1}{2}(\n(G)-\gamma(G)).
\end{align*} 
Therefore, the result follows.
\end{proof}

%--------------- Corona --------------

In order to show a class of graphs with $\partial_s(G)=\frac{1}{2}(\n(G)-\gamma(G))$, we consider the case of 
corona graphs. Given two graphs $G_1$ and $G_2$, the {corona product graph} $G_1\odot G_2$ is the graph obtained from $G_1$ and $G_2$, by taking one copy of $G_1$ and $\n(G_1)$ copies of $G_2$ and joining by an edge every vertex from the $i^{th}$-copy of $G_2$ with the $i^{th}$-vertex of $G_1$. For every $x\in V(G_1)$, the copy of $G_2$ in $G_1\odot G_2$ associated to $x$ will be denoted by $G_{2,x}$. The following result was presented without proof in \cite{BermudoStrongProduct}. The reader is referred to  \cite{PerfectDifferential} for a detailed proof. 

\begin{proposition}{\rm \cite{BermudoStrongProduct}}\label{DifferentialCorona}
For any graph $G_1$ and any nontrivial graph $G_2$,
$$\partial(G_1\odot G_2)=\n(G_1)(\n(G_2)-1).$$
\end{proposition}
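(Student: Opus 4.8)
The plan is to establish the equality through two matching inequalities, after fixing notation. Write $n_1=\n(G_1)$ and $n_2=\n(G_2)$, so that $\n(G_1\odot G_2)=n_1(1+n_2)$, and for each $x\in V(G_1)$ let $U_x=\{x\}\cup V(G_{2,x})$ be the \emph{unit} consisting of $x$ together with its associated copy of $G_2$. These units partition $V(G_1\odot G_2)$, and the decisive structural fact is that every vertex of $G_{2,x}$ has all of its neighbours inside $U_x$ (namely $x$ and the remaining vertices of $G_{2,x}$); only the vertices $x\in V(G_1)$ have neighbours outside their own unit.

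For the lower bound I would simply take $D=V(G_1)$. Since each $x$ is adjacent to every vertex of $G_{2,x}$, we get $N_e(D)=\bigcup_{x}V(G_{2,x})$, so $|N_e(D)|=n_1 n_2$ and $\partial(D)=n_1 n_2-n_1=n_1(n_2-1)$. Hence $\partial(G_1\odot G_2)\ge n_1(n_2-1)$.

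For the upper bound I would fix an arbitrary $D\subseteq V(G_1\odot G_2)$ and decompose the differential along the units. Writing $d_x=|D\cap U_x|$ and using that the units partition the vertex set, we have
$$\partial(D)=|N_e(D)|-|D|=\sum_{x\in V(G_1)}\bigl(|N_e(D)\cap U_x|-d_x\bigr),$$
so it suffices to show that each summand is at most $n_2-1$. If $d_x\ge 1$, then $|N_e(D)\cap U_x|\le |U_x|-d_x=(1+n_2)-d_x$, whence the summand is at most $(1+n_2)-2d_x\le n_2-1$. If instead $d_x=0$, then by the structural fact above no vertex of $G_{2,x}$ can lie in $N_e(D)$, since all its neighbours are in $U_x$, which is disjoint from $D$; thus $N_e(D)\cap U_x\subseteq\{x\}$ and the summand is at most $1$. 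Summing over the $n_1$ units yields $\partial(D)\le n_1(n_2-1)$, and taking the maximum over $D$ completes the argument.

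The only delicate point, and the step I would treat most carefully, is the bound on units with $d_x=0$: it is exactly here that the corona structure (the vertices of $G_{2,x}$ being trapped inside $U_x$) and the hypothesis that $G_2$ is nontrivial, i.e.\ $n_2\ge 2$, are both indispensable, because one then needs $1\le n_2-1$ to conclude. Indeed, for $n_2=1$ this per-unit inequality fails and the formula itself breaks down; for instance $\partial(P_4)=1\neq 0$ although $P_4\cong K_2\odot K_1$. As an alternative route, one could bypass the direct count entirely by invoking Theorem~\ref{Th-Gallai-Roman}: the identity $\gamma_{_R}(G)+\partial(G)=\n(G)$ reduces the claim to showing $\gamma_{_R}(G_1\odot G_2)=2n_1$, where the upper bound comes from assigning weight $2$ to every vertex of $G_1$ and weight $0$ elsewhere, and the lower bound follows from the same unit decomposition, checking that any Roman dominating function must place weight at least $2$ on each $U_x$.
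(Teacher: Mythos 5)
Your argument is correct and complete. Note, however, that the paper itself does not prove Proposition~\ref{DifferentialCorona}: it is quoted from \cite{BermudoStrongProduct}, where it appeared without proof, and the reader is sent to \cite{PerfectDifferential} for the details, so there is no in-paper proof to compare against. Your unit decomposition $V(G_1\odot G_2)=\bigcup_x U_x$ with the per-unit estimate $|N_e(D)\cap U_x|-d_x\le n_2-1$ is a clean, self-contained way to get the upper bound, and the two cases ($d_x\ge 1$ versus $d_x=0$, where the vertices of $G_{2,x}$ are trapped inside $U_x$ and hence cannot be externally dominated) are both handled correctly; the lower bound via $D=V(G_1)$ is immediate. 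You are also right to flag that nontriviality of $G_2$ enters exactly through the inequality $1\le n_2-1$ in the $d_x=0$ case, and the counterexample $K_2\odot K_1\cong P_4$ with $\partial(P_4)=1$ confirms the hypothesis is not removable. Your alternative route via Theorem~\ref{Th-Gallai-Roman}, reducing the claim to $\gamma_{_R}(G_1\odot G_2)=2\n(G_1)$, is likewise sound (the lower bound there again rests on each unit forcing Roman weight at least $2$, which uses $n_2\ge 2$ in the subcase where the single unit of weight sits on a vertex of $G_{2,x}$), and it mirrors the spirit of the paper, which systematically converts differential statements into domination statements through Gallai-type identities.
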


From the previous result we can easily deduce the following proposition. 

\begin{proposition}\label{StrogDifferentialCorona}
For any graph $G_1$ and any nontrivial graph $G_2$,
$$\partial_s(G_1\odot G_2)=\n(G_1)(\n(G_2)-1).$$
\end{proposition}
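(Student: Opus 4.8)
The plan is to prove the two inequalities separately, using Proposition~\ref{DifferentialCorona} together with the identity $\partial_s(D)=\partial(D)+|D_s|$ that was recorded in the introduction. For the lower bound, I would first exhibit an explicit good set. A natural choice is $D=V(G_1)$, the set consisting of all the central vertices. With this placement every vertex in each copy $G_{2,x}$ is dominated by the central vertex $x$, so $N_e(D)=V(G)\setminus V(G_1)$ and hence $|N_e(D)|=\n(G_1)\n(G_2)$. The point is to check that $D_w=\varnothing$: since $G_2$ is nontrivial, every copy $G_{2,x}$ contains at least two vertices, and each such vertex is adjacent to its central vertex $x$; thus for any $x\in V(G_1)$ the external private neighbourhood $\epn(x,D)$ could in principle be nonempty, so I would instead verify that each $x$ is a \emph{weak} defender only if one of its copy-vertices is dominated solely by $x$. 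This forces a more careful choice of $D$, and is the main obstacle described below.

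To sidestep that difficulty, the cleaner route is to build $D$ from an optimal differential set of $G_1\odot G_2$ and then argue $\partial_s=\partial$ for this graph. Concretely, I would take a $\partial(G_1\odot G_2)$-set $D$ and show that one may assume $V(G_1)\subseteq D$ and that $D$ meets each copy $G_{2,x}$ in a way that leaves no external private neighbours; the key structural fact is that once a central vertex $x$ and at least one vertex of $G_{2,x}$ both lie in $D$, any copy-vertex outside $D$ that is adjacent to $x$ is also adjacent to the chosen copy-vertex only when $G_2$ has the right adjacencies, so instead I would ensure the single uncovered vertex of each copy is defended by two vertices of $D$. This yields $D_s=D$, i.e.\ $D_w=\varnothing$, whence $\partial_s(D)=\partial(D)=\partial(G_1\odot G_2)=\n(G_1)(\n(G_2)-1)$, giving $\partial_s(G_1\odot G_2)\ge \n(G_1)(\n(G_2)-1)$.

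For the upper bound I would use the general relation $\partial_s(D)=\partial(D)+|D_s|$ valid for every $D\subseteq V(G_1\odot G_2)$, together with Lemma~\ref{Strong-dif-dominante}, which lets me fix a $\partial_s(G_1\odot G_2)$-set $D$ that is dominating. The obstacle is that $\partial_s$ can exceed $\partial$ precisely by $|D_s|$, so I must bound $|D_s|$ against the slack in the differential. The argument I anticipate is a counting one per copy: for each $x\in V(G_1)$, analyse how $D$ intersects $\mathcal{L}[x]$-type blocks, i.e.\ the vertices $\{x\}\cup V(G_{2,x})$, and show that any strong defender lying in a copy or at a centre is compensated by a corresponding deficit in $|N_e(D)|$, so that $\partial_s(D)$ never exceeds $\n(G_1)(\n(G_2)-1)$. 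Combining this with Proposition~\ref{DifferentialCorona}, which already gives $\partial(G_1\odot G_2)=\n(G_1)(\n(G_2)-1)$, and with the trivial inequality $\partial_s(G)\ge\partial(G)$, both bounds meet and the equality follows.

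The main obstacle, as flagged, is the bookkeeping of weak versus strong defenders inside each copy $G_{2,x}$: I must rule out that a clever placement creating many strong defenders could push $\partial_s$ strictly above $\partial$. The resolution I expect is that the corona structure is rigid enough — each copy is pendant-attached through a single centre — that every extra strong defender either fails to increase $|N_e(D)|$ or duplicates coverage already provided by the centre, so no net gain over $\partial(G_1\odot G_2)$ is possible.
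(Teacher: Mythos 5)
Your overall skeleton matches the paper's: the lower bound comes for free from $\partial_s(G)\ge\partial(G)$ together with Proposition~\ref{DifferentialCorona}, and the upper bound is a per-block count over the sets $\{x\}\cup V(G_{2,x})$. However, the upper bound --- which is the entire content of the proof --- is only ``anticipated'' in your write-up, never executed, and that is a genuine gap. The concrete statement you are missing is the following: if $S$ is a $\partial_s(G_1\odot G_2)$-set and $S'=\{x\in V(G_1):\, S\cap(\{x\}\cup V(G_{2,x}))\ne\varnothing\}$, then
$$|N_e(S)|-|S_w|\;\le\;(\n(G_1)-|S'|)+\sum_{x\in S'}(\n(G_2)-1)\;\le\;\n(G_1)(\n(G_2)-1).$$
To justify the first inequality you need two observations you never make precise. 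First, every vertex of $V(G_{2,x})$ has all of its neighbours inside the block $\{x\}\cup V(G_{2,x})$, so a block with $x\notin S'$ can contribute at most one vertex (namely the centre $x$, via an adjacent centre in $S$) to $N_e(S)$. Second, for $x\in S'$ the block contributes at most $\n(G_2)+1-|S\cap(\{x\}\cup V(G_{2,x}))|$ vertices to $N_e(S)$; this is already $\le \n(G_2)-1$ when the block meets $S$ twice, and when it meets $S$ in a single vertex $v$ and contributes the full $\n(G_2)$, some vertex of the block lying in $N_e(S)$ is adjacent to no member of $S$ other than $v$, forcing $v\in S_w$ and again a net contribution of at most $\n(G_2)-1$. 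The final inequality then uses $\n(G_2)\ge 2$. Without this case analysis your assertion that ``no net gain over $\partial(G_1\odot G_2)$ is possible'' is just a hope; strong defenders really could, a priori, add $|D_s|$ on top of $\partial(D)$, and it is exactly this counting that rules it out.

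A secondary remark on your lower bound: the detour is both confused and unnecessary. Your worry about $D=V(G_1)$ is misplaced --- that set \emph{does} achieve the bound, since every copy vertex is an external private neighbour of its centre, so $D_w=D$ and $\partial_s(D)=\n(G_1)\n(G_2)-\n(G_1)=\n(G_1)(\n(G_2)-1)$; there is no need to engineer a set with $D_w=\varnothing$, and the construction you sketch for that purpose is not clearly available for arbitrary $G_2$. In any case, as you note at the end, Theorem~\ref{Prop-RElat-Differentials2} and Proposition~\ref{DifferentialCorona} give $\partial_s(G_1\odot G_2)\ge\partial(G_1\odot G_2)=\n(G_1)(\n(G_2)-1)$ in one line, which is precisely what the paper does.
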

\begin{proof}
By Theorem \ref{Prop-RElat-Differentials2} and Proposition  \ref{DifferentialCorona} we deduce that $$\partial_s(G_1\odot G_2)\ge \n(G_1)(\n(G_2)-1).$$
Now, let $S$ be a $\partial_s(G_1\odot G_2)$-set and $S'=\{x\in V(G_1) : \, S\cap (\{x\}\cup V(G_{2,x}))\ne \varnothing\}$. Thus, 
$$ \partial_s(G_1\odot G_2)=|N_e(S)|-|S_w|\le (\n(G_1)-|S'|)+\sum_{x\in S'}(\n(G_2)-1)\le \n(G_1)(\n(G_2)-1).$$
Therefore, the result follows.
\end{proof}

The bound given in Theorem \ref{BoundCover,gamma1} is achieved  by  any corona graph of the form $G\cong G_1\odot K_2$, where $G_1$ is an arbitrary graph. In this case, $\gamma(G)=\n(G_1)$, so that 
$\partial_s(G)=\n(G_1)=\frac{1}{2}(\n(G)-\gamma(G)).$

%To show a family of graphs for which the bound above is achieved, we take  $\mathcal{G}'$ as the family of graphs of order at least three, where $\{\mathcal{L}(G),\mathcal{S}(G)\}$ is a partition of $V(G)$ and every support vertex is  adjacent to exactly two leaves. In this case, for every $G\in \mathcal{G}'$ we have that $\gamma(G)=|\mathcal{S}(G)|=\sigma(G)$ and $\n(G)=3|\mathcal{S}(G)|$. Hence,  by Theorem \ref{BoundCover,gamma1}, $\partial_s(S)\ge \frac{1}{2}(\n(G)-\gamma(G))=|\mathcal{S}(G)|$. Now, since Theorem \ref{Prop-RElat-Differentials} leads to $\partial_s(S)\le \n(G)-\gamma(G)-\sigma(G)=|\mathcal{S}(G)|$,  the equality $\partial_s(S)= \frac{1}{2}(\n(G)-\gamma(G))$ holds.

\subsection{Bounds in terms of domination number and differential}

\begin{theorem}\label{Prop-RElat-Differentials2}
For any  graph $G$, 
$$\partial(G)  \le \partial_s(G)\le \partial(G)+\gamma(G)-1.$$ 
\end{theorem}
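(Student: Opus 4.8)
The plan is to prove the two inequalities separately; the lower bound is immediate, and the upper bound needs a short case analysis to extract the constant $-1$. For the lower bound $\partial(G)\le\partial_s(G)$, I would use the identity $\partial_s(S)=\partial(S)+|S_s|$ recorded in the introduction: taking $S$ to be a $\partial(G)$-set gives $\partial_s(G)\ge\partial_s(S)=\partial(S)+|S_s|\ge\partial(S)=\partial(G)$, since $|S_s|\ge 0$. This requires no further work.

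For the upper bound I would first establish two ``envelope'' estimates. On one side, if $D$ is a dominating $\partial_s(G)$-set (Lemma \ref{Strong-dif-dominante}), then Remark \ref{eq-2} yields $\partial_s(G)=\n(G)-|D|-|D_w|\le\n(G)-|D|\le\n(G)-\gamma(G)$, using $|D|\ge\gamma(G)$. On the other side, any $\gamma(G)$-set $\Gamma$ is dominating, so $N_e(\Gamma)=V(G)\setminus\Gamma$ and hence $\partial(G)\ge\partial(\Gamma)=\n(G)-2\gamma(G)$. Together these already give $\partial_s(G)\le\partial(G)+\gamma(G)$, so the entire difficulty lies in squeezing out the extra $-1$.

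I would resolve this through a dichotomy on the value of $\partial_s(G)$. If $\partial_s(G)\le\n(G)-\gamma(G)-1$, then combining with $\partial(G)\ge\n(G)-2\gamma(G)$ gives $\partial_s(G)\le\n(G)-\gamma(G)-1\le\partial(G)+\gamma(G)-1$ at once. The only remaining possibility is the boundary value $\partial_s(G)=\n(G)-\gamma(G)$, which is precisely condition (i) of Proposition \ref{CharactEqGamma}, so $\gamma_{_2}(G)=\gamma(G)$; that is, there is a $2$-dominating set $S$ with $|S|=\gamma(G)$. Assuming $S\ne V(G)$ (the contrary case is edgeless and trivial), delete a single vertex $w$: every vertex of $V(G)\setminus S$ has at least two neighbours in $S$, hence at least one in $S\setminus\{w\}$, so $V(G)\setminus S\subseteq N_e(S\setminus\{w\})$. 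Therefore $\partial(G)\ge\partial(S\setminus\{w\})\ge(\n(G)-\gamma(G))-(\gamma(G)-1)=\n(G)-2\gamma(G)+1$, and substituting gives $\partial_s(G)=\n(G)-\gamma(G)\le\partial(G)+\gamma(G)-1$, as required.

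The main obstacle is exactly this boundary case: the two envelope bounds lose one unit, and recovering it hinges on the observation that $2$-domination lets one delete a vertex from a minimum $2$-dominating set while still dominating everything outside, thereby upgrading $\partial(G)\ge\n(G)-2\gamma(G)$ to $\partial(G)\ge\n(G)-2\gamma(G)+1$. I would finally double-check the degenerate situations ($\gamma(G)=\n(G)$, or $S=V(G)$, i.e.\ edgeless graphs), where all three quantities collapse and the inequality holds trivially.
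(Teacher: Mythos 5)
Your proposal is correct and follows essentially the same route as the paper: the lower bound via $\partial(D)\le\partial_s(D)$ for a $\partial(G)$-set, and the upper bound by combining $\partial_s(G)\le \n(G)-\gamma(G)$ with $\partial(G)\ge \n(G)-2\gamma(G)$, then handling the boundary case through Proposition \ref{CharactEqGamma} and the deletion of one vertex from a minimum $2$-dominating set to upgrade the differential bound to $\n(G)-2\gamma(G)+1$. The only difference is organizational (a direct dichotomy instead of the paper's argument by contradiction), which changes nothing of substance.
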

\begin{proof}
It is well known that for every graph $G$,
\begin{equation}\label{EqLoweBoundDifferential}
\partial(G)\ge \n(G)-2\gamma(G).
\end{equation}
From Theorem \ref{Prop-RElat-Differentials} and Eq.\ \eqref{EqLoweBoundDifferential} we have that
$\partial_s(G)\le \n(G)-\gamma(G)\le \partial(G)+\gamma(G)$. Now, if $\partial_s(G)= \partial(G)+\gamma(G)$, then $\partial(G)=\n(G)-2\gamma(G)$ and $\partial_s(G)= \n(G)-\gamma(G)$. Hence, by Proposition \ref{CharactEqGamma}, $\gamma_2(G)=\gamma(G)$. Thus, any $\gamma_{_2}(G)$-set $S$ is an independent set, and  so for any $v\in S$, we have that 
$N_e(S\setminus \{v\})=V(G)\setminus S$. This implies that
$$\partial(G)\ge \partial(S\setminus \{v\})=|N_e(S\setminus \{v\})|-|(S\setminus \{v\})|=
\n(G)-2\gamma(G)+1,$$ which is a contradiction. Therefore, 
$\partial_s(G)\le  \partial(G)+\gamma(G)-1$.

On the other side, for any $\partial(G)$-set $D$, 
$$\partial(G)=\partial(D)= |N_e(D)|-|D|\le |N_e(D)|-|D_w|= \partial_s(D)\le \partial_s(G).$$
Therefore, the result follows.
\end{proof}

The upper bound $\partial_s(G)\le \partial(G)+\gamma(G)-1$ is tight. For instance, it is achieved by the complete bipartite graph $K_{2, r}$.

Next we proceed to discuss the structure of some particular  $\partial_s(G)$-sets for graphs with $\partial_s(G)=\partial(G)$.

\begin{proposition}\label{CondNecSufEqualityDifs2} 
Given a  graph $G$,  the following statements are equivalent.
 \begin{enumerate}[{\rm (i)}]
 \item $\partial_s(G)=\partial(G)$. 
 \item There exists a $\partial_s(G)$-set $D$ such that $D_s=\varnothing$.
 \end{enumerate}
\end{proposition}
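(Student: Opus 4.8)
The plan is to prove the equivalence using two ingredients that are already available: the identity $\partial_s(D)=\partial(D)+|D_s|$ recorded in the introduction, and the lower bound $\partial(G)\le\partial_s(G)$ from Theorem~\ref{Prop-RElat-Differentials2}. Once these are in hand, neither implication requires any combinatorial construction, so the argument reduces to a short chain of inequalities in each direction.

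For the implication (ii)$\Rightarrow$(i), I would start from a $\partial_s(G)$-set $D$ with $D_s=\varnothing$. The identity then gives $\partial_s(G)=\partial_s(D)=\partial(D)+|D_s|=\partial(D)$, and since $\partial(D)\le\partial(G)$ by definition of the differential of a graph, this yields $\partial_s(G)\le\partial(G)$. Combining with $\partial(G)\le\partial_s(G)$ from Theorem~\ref{Prop-RElat-Differentials2} forces $\partial_s(G)=\partial(G)$, which is exactly (i).

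For the implication (i)$\Rightarrow$(ii), the key move is to test the identity on a $\partial(G)$-set rather than trying to build a special set by hand. So I would fix any $D$ with $\partial(D)=\partial(G)$ (such a set exists since $V(G)$ is finite). Assuming (i), we have $\partial(D)=\partial(G)=\partial_s(G)$, and the identity gives $\partial_s(D)=\partial(D)+|D_s|=\partial_s(G)+|D_s|$. But $\partial_s(D)\le\partial_s(G)$ always, so $\partial_s(G)+|D_s|\le\partial_s(G)$, whence $|D_s|=0$, i.e. $D_s=\varnothing$. Feeding this back in gives $\partial_s(D)=\partial_s(G)$, so $D$ is a $\partial_s(G)$-set with $D_s=\varnothing$, establishing (ii).

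I do not expect a genuine obstacle here; the whole proof is a bookkeeping exercise once the correct test set is chosen in each direction. The only point that requires a moment of care is the forward direction, where the natural instinct might be to manipulate an arbitrary $\partial_s(G)$-set, whereas the clean argument is to start from a $\partial(G)$-set and observe that optimality of both parameters simultaneously pins $|D_s|$ to zero. If anything, I would double-check that every inequality used ($\partial(D)\le\partial(G)$, $\partial_s(D)\le\partial_s(G)$, and $\partial(G)\le\partial_s(G)$) is invoked in the correct direction, since the entire argument hinges on squeezing the two parameters against each other.
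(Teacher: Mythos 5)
Your proof is correct and follows essentially the same route as the paper's: for (ii)$\Rightarrow$(i) you use the given $\partial_s(G)$-set with $D_s=\varnothing$ together with the bound $\partial(G)\le\partial_s(G)$, and for (i)$\Rightarrow$(ii) you test a $\partial(G)$-set and squeeze $|D_s|$ to zero, exactly as the paper does with its explicit inequality chain $|N_e(D')|-|D'_w|=\partial_s(D')\le\partial_s(G)=\partial(G)=|N_e(D')|-|D'|\le|N_e(D')|-|D'_w|$. Your use of the identity $\partial_s(D)=\partial(D)+|D_s|$ is just a tidier packaging of the same computation.
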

 
\begin{proof}
 If there exists a $\partial_s(G)$-set $D$ with $D_w=D$, then 
$$\partial_s(G)=\partial_s(D)=|N_e(D)|-|D_w|= |N_e(D)|-|D|=\partial(D)\le \partial(G).$$ Hence, Theorem \ref{Prop-RElat-Differentials2} leads to $\partial_s(G)=\partial(G)$. 

Conversely, assume $\partial_s(G)=\partial(G)$. 
If $D'$ is a  $\partial(G)$-set, then  $$|N_e(D')|-|D'_w|= \partial_s(D')\le  \partial_s(G)=\partial(G)=|N_e(D')|-|D'|\le |N_e(D')|-|D'_w|,$$  and since $D'_w\subseteq D'$, we conclude that $D'=D'_w$ and also $D'$ is a $\partial_s(G)$-set,   which completes the proof.   
\end{proof}

 To conclude this subsection we proceed to derive a result for the particular case of trees. To this end, we need to recall the following known result. 
 
\begin{theorem}\label{BoundTrees} {\rm \cite{CHELLALI201622}}
For any tree $T$,
$$\gamma_{_I}(T)\ge \frac{3}{4}\gamma_{_R}(T).$$
\end{theorem}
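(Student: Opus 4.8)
The plan is to prove the equivalent statement $\gamma_{_R}(T)\le \frac{4}{3}\gamma_{_I}(T)$ by turning an optimal Italian dominating function into a Roman dominating function of controlled weight. Recall the general relation $\gamma_{_I}(G)\le \gamma_{_R}(G)$ together with the crude bound $\gamma_{_R}(G)\le 2\gamma_{_I}(G)$, obtained by promoting every positively weighted vertex of an IDF to weight $2$; the whole point is that on a tree this promotion can be done far more economically, replacing the factor $2$ by $\tfrac{4}{3}$.

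First I would fix a $\gamma_{_I}(T)$-function $f(V_0,V_1,V_2)$, chosen extremally, for instance so that $|V_1|$ is minimum (equivalently $|V_2|$ is maximum) among all $\gamma_{_I}(T)$-functions. Keeping the weights on $V_2$, I split $V_0$ into the vertices having a neighbour in $V_2$, which are already Roman-dominated, and the set $V_0'$ of vertices with no neighbour in $V_2$; every vertex of $V_0'$ has at least two neighbours in $V_1$. To build an RDF it then suffices to select a set $U\subseteq V_1$ that dominates $V_0'$ and raise the weight of each vertex of $U$ from $1$ to $2$, leaving the remaining vertices of $V_1$ at weight $1$. The resulting function $g$ is an RDF of weight $\omega(g)=|V_1|+2|V_2|+|U|=\gamma_{_I}(T)+|U|$, so the theorem reduces to the single inequality $|U|\le \tfrac{1}{3}|V_1|\le \tfrac{1}{3}\gamma_{_I}(T)$ for a suitable choice of $U$.

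The heart of the matter is this covering bound, and here the tree hypothesis enters decisively. Consider the bipartite graph $H$ whose parts are $V_1$ and $V_0'$ and whose edges are the edges of $T$ between them; since $T$ is a tree, $H$ is a forest, and every vertex of $V_0'$ has degree at least $2$ in $H$. In $H$ a single vertex of $V_1$ can simultaneously dominate several vertices of $V_0'$, and the extremal choice of $f$ should force exactly this kind of sharing: the paradigmatic tight case is $P_5$ with $f=(1,0,1,0,1)$, where $V_0'$ consists of the two even vertices, both dominated by the central $1$, giving $|U|=1=\tfrac{1}{3}|V_1|$ and the extremal ratio $\gamma_{_I}/\gamma_{_R}=3/4$. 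I would therefore prove $|U|\le \tfrac13|V_1|$ by a counting (or discharging) argument over the components of the forest $H$, assigning to each chosen dominator a charge paid by its at-least-two partners in $V_1$.

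The main obstacle is precisely controlling the components of $H$ in which this sharing fails, the obstruction being an isolated path $w\text{--}x\text{--}w'$ with $x\in V_0'$ and $w,w'\in V_1$, where one would be forced to upgrade a full half of the $V_1$-vertices. I expect to rule these out by minimality: if $w$ and $w'$ serve no other zero-vertex, then replacing $f(w)=f(w')=1$ by $f(x)=2$ preserves both the weight and the IDF property while enlarging $|V_2|$, contradicting the extremal choice of $f$; and if they do serve other zero-vertices, those neighbouring structures can absorb the charge in the global count. Making this local-repair-versus-global-accounting dichotomy airtight across all of $H$ is the delicate part, and if a clean charging scheme proves elusive I would fall back on the standard but laborious route of induction on $\n(T)$, rooting $T$ at the end of a longest path and tracking the quantity $4\gamma_{_I}(T)-3\gamma_{_R}(T)\ge 0$ as pendant leaves and support vertices are deleted; there the difficulty shifts to the case analysis needed to bound the simultaneous change of both invariants.
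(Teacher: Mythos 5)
First, a point of reference: the paper does not prove this statement itself; it is quoted verbatim from \cite{CHELLALI201622}, so there is no in-paper argument to compare yours against. Judged on its own terms, your proposal has a genuine gap: the covering lemma to which you reduce everything, namely that some $U\subseteq V_1$ dominating $V_0'$ satisfies $|U|\le \tfrac{1}{3}|V_1|$, is false. Take $T=P_7$ with vertices $v_1,\dots,v_7$. A short case analysis shows that the \emph{unique} $\gamma_{_I}(P_7)$-function is $(1,0,1,0,1,0,1)$, so your extremal choice of $f$ is forced: $V_2=\varnothing$, $V_1=\{v_1,v_3,v_5,v_7\}$, $V_0'=\{v_2,v_4,v_6\}$. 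No single vertex of $V_1$ is adjacent to all three vertices of $V_0'$, so every admissible $U$ has $|U|\ge 2>\tfrac{4}{3}=\tfrac{1}{3}|V_1|$, and your construction outputs an RDF of weight $4+2=6$, whereas $\gamma_{_R}(P_7)=5$ and $\tfrac{4}{3}\gamma_{_I}(P_7)=\tfrac{16}{3}$. The failure is not marginal: on $P_{4k+3}$ with the alternating $\gamma_{_I}$-function, $H$ is a single alternating path, the minimum $U$ has size $k+1$ against $|V_1|=2k+2$, so the ratio $|U|/|V_1|$ tends to $\tfrac{1}{2}$ and your scheme can only ever deliver $\gamma_{_R}(T)\le \tfrac{3}{2}\gamma_{_I}(T)$.

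The structural reason is that your transformation is monotone (every vertex of $V_1$ keeps weight at least $1$), but an optimal RDF need not pointwise dominate any optimal IDF: on $P_7$ the weight-$5$ RDF $(0,2,0,0,2,0,1)$ demotes $v_1,v_3,v_5$ to $0$. Any correct argument must be allowed to send some $V_1$-vertices to weight $0$ and re-centre weight $2$ on vertices of $V_0'$. Your local repair (replacing $f(w)=f(w')=1$ by $f(x)=2$) addresses only the component $w$--$x$--$w'$ of $H$ with a single $V_0'$-vertex; on longer alternating paths every internal $V_1$-vertex already serves two zero-vertices, so no such swap is available (indeed on $P_7$ none exists, by uniqueness of the $\gamma_{_I}$-function), and your "global accounting" has nothing to charge against. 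The fallback you mention --- induction on $\n(T)$ rooted at the end of a longest path, tracking $4\gamma_{_I}(T)-3\gamma_{_R}(T)\ge 0$ --- is a viable route to the true statement, but as written it is only a declaration of intent, and the primary argument you actually develop does not close.
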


 The following bound is a direct consequence of Theorems \ref{Th-Gallai-Roman-Strong}
and \ref{BoundTrees}.

\begin{theorem}  For any tree $T$,
$$\partial_s(T)\leq \left\lfloor \frac{\n(T)+3\partial(T)}{4} \right\rfloor.$$
\end{theorem}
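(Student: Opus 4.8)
The plan is to combine the two Gallai-type identities with the tree inequality of Theorem \ref{BoundTrees} by straightforward substitution, and then invoke integrality of the strong differential to introduce the floor. The point is that both $\partial_s$ and $\partial$ are encoded as ``defect'' parameters via the complementary domination-type invariants, so the bound $\gamma_{_I}(T)\ge \frac{3}{4}\gamma_{_R}(T)$ transcribes directly into a relation between $\partial_s(T)$ and $\partial(T)$.

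First I would rewrite the two domination invariants in terms of differentials. By Theorem \ref{Th-Gallai-Roman-Strong} we have $\gamma_{_I}(T)=\n(T)-\partial_s(T)$, and by Theorem \ref{Th-Gallai-Roman} we have $\gamma_{_R}(T)=\n(T)-\partial(T)$. Substituting both expressions into the inequality of Theorem \ref{BoundTrees} yields
$$\n(T)-\partial_s(T)\ge \frac{3}{4}\bigl(\n(T)-\partial(T)\bigr).$$

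Next I would isolate $\partial_s(T)$. Solving the displayed inequality gives
$$\partial_s(T)\le \n(T)-\frac{3}{4}\bigl(\n(T)-\partial(T)\bigr)=\frac{\n(T)+3\partial(T)}{4}.$$
This is the announced bound up to the floor. To upgrade the right-hand side to its floor, I would note that $\partial_s(T)=|N_e(D)|-|D_w|$ is a difference of cardinalities of vertex sets, hence an integer; any integer bounded above by a real number $r$ is bounded above by $\lfloor r\rfloor$. Applying this with $r=\frac{\n(T)+3\partial(T)}{4}$ gives
$$\partial_s(T)\le \left\lfloor \frac{\n(T)+3\partial(T)}{4}\right\rfloor,$$
as required.

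There is essentially no substantive obstacle here: the whole argument is a mechanical rearrangement once the two Gallai-type theorems are in hand, and the only point requiring a word of justification is the passage to the floor, which rests solely on $\partial_s(T)\in\mathbb{Z}$. I would keep the writeup to these few lines and resist the temptation to re-derive any of Theorems \ref{Th-Gallai-Roman}, \ref{Th-Gallai-Roman-Strong}, or \ref{BoundTrees}, since all three are available for citation.
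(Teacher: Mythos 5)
Your proposal is correct and is essentially the paper's own argument: the paper states the bound as a direct consequence of Theorems \ref{Th-Gallai-Roman-Strong} and \ref{BoundTrees} (with Theorem \ref{Th-Gallai-Roman} implicitly used to write $\gamma_{_R}(T)=\n(T)-\partial(T)$), exactly as you do. The only addition you make is spelling out the passage to the floor via integrality of $\partial_s(T)$, which the paper leaves implicit.
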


The bound above is tight. For instance, it is achieved by any path of order $6k+3$, as $\partial_s(P_{6k+3})=3k+1$ and $\partial(P_{6k+3})=2k+1$, which implies that $\partial_s(P_{6k+3})=\left\lfloor \frac{6k+3+3\partial(P_{6k+3})}{4} \right\rfloor$.

%Notice that  if $\partial_s(G)=\partial(G)$, then   every  $\partial(G)$-set $D$ satisfies $D_w=D$, but obviously the converse does not hold. For instance,  we can take the cycle graph $C_4$,  where for any $\partial(C_4)$-set $D$ we have that  $|D_w|=|D|=1$, while $\partial_s(C_4)=2>1=\partial(C_4)$.

%%----------------------------------

\subsection{Bounds in terms of order and maximum degree}
\label{SubsectionMaxDegree}

Next we present another basic result obtained in  \cite{CHELLALI201622}.  

\begin{theorem}\label{LowerBound-Chellai}
{\rm \cite{CHELLALI201622}}  For any connected graph $G$, $\gamma_{_I}(G)\ge \frac{2\n(G)}{\Delta(G)+2}$. 
\end{theorem}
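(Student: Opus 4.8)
The plan is to prove this lower bound directly from the definition of an Italian dominating function, by a double-counting (discharging) argument that estimates the total weight received by the zero-weight vertices from their neighbours. This is a self-contained argument and does not require the Gallai-type identity of Theorem~\ref{Th-Gallai-Roman-Strong}; connectedness enters only to guarantee $\Delta(G)\ge 1$, so that the bound is meaningful.

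First I would fix a $\gamma_{_I}(G)$-function $f(V_0,V_1,V_2)$, so that $\omega(f)=|V_1|+2|V_2|=\gamma_{_I}(G)$. The defining IDF condition gives $f(N(v))\ge 2$ for every $v\in V_0$, and summing over $V_0$ yields $\sum_{v\in V_0} f(N(v))\ge 2|V_0|$. The key step is to re-count this same sum by grouping the contributions according to the weighted vertex that supplies them, using $u\in N(v)\iff v\in N(u)$:
$$\sum_{v\in V_0} f(N(v))=\sum_{v\in V_0}\sum_{u\in N(v)}f(u)=\sum_{u\in V(G)}f(u)\,|N(u)\cap V_0|.$$
Since $f$ vanishes on $V_0$, only the vertices of $V_1\cup V_2$ contribute, and for each such $u$ we have $|N(u)\cap V_0|\le \deg(u)\le \Delta(G)$. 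Combining this with the lower bound on the same sum gives
$$2|V_0|\le \sum_{u\in V_1\cup V_2}f(u)\,|N(u)\cap V_0|\le \Delta(G)\sum_{u\in V_1\cup V_2}f(u)=\Delta(G)\,\gamma_{_I}(G).$$

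Next I would bound $|V_0|$ from below. Since $\n(G)=|V_0|+|V_1|+|V_2|$ and $|V_1|+|V_2|\le |V_1|+2|V_2|=\gamma_{_I}(G)$, we obtain $|V_0|\ge \n(G)-\gamma_{_I}(G)$. Substituting into the previous display gives $2(\n(G)-\gamma_{_I}(G))\le \Delta(G)\,\gamma_{_I}(G)$, and rearranging yields $2\n(G)\le (\Delta(G)+2)\,\gamma_{_I}(G)$, which is exactly the claimed inequality.

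The argument is essentially routine, so I do not anticipate a serious obstacle; the one point that needs care is the re-counting identity together with the degree estimate $|N(u)\cap V_0|\le \Delta(G)$, where one must remember that a vertex $u\in V_1\cup V_2$ contributes with multiplicity $f(u)$ rather than being counted once. As a sanity check the bound is tight: for an even cycle $C_n$ one has $\Delta(G)=2$, $\n(G)=n$, and assigning weight $1$ to alternate vertices shows $\gamma_{_I}(C_n)=n/2=\frac{2\n(C_n)}{\Delta(C_n)+2}$, so equality can indeed hold.
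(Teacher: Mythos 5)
Your argument is correct. Note, however, that the paper does not prove this statement at all: it is quoted from Chellali, Haynes, Hedetniemi and McRae \cite{CHELLALI201622}, so there is no in-paper proof to compare against. Your double-counting of $\sum_{v\in V_0}f(N(v))$ is the standard way to obtain this bound: the lower estimate $2|V_0|$ from the IDF condition, the regrouping $\sum_{u}f(u)\,|N(u)\cap V_0|\le \Delta(G)\,\omega(f)$, and the inequality $|V_0|\ge \n(G)-\gamma_{_I}(G)$ (which uses $|V_1|+|V_2|\le |V_1|+2|V_2|$) combine exactly as you say to give $2\n(G)\le(\Delta(G)+2)\gamma_{_I}(G)$. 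Two small points are worth recording. First, your proof nowhere uses connectedness, which is consistent with the paper's remark immediately after the theorem that the bound also holds for non-connected graphs; your parenthetical that connectedness is needed to ensure $\Delta(G)\ge 1$ is therefore dispensable (the chain of inequalities is valid even when $\Delta(G)=0$, in which case $V_0=\varnothing$ and the bound reads $\gamma_{_I}(G)\ge \n(G)$). Second, your tightness check on even cycles is consistent with the known value $\gamma_{_I}(C_n)=\lceil n/2\rceil$.
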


It is not difficult to observe that the result above works for non-connected graphs.  

\begin{theorem}\label{Prop-RElat-Differentials3}
For  any  graph $G$,
$$ \Delta(G)-1   \le \partial_s(G)\le \frac{\n(G)\Delta(G)}{\Delta(G)+2}.$$ 
\end{theorem}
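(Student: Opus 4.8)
The plan is to establish the two inequalities separately, handling the upper bound by appealing to the Gallai-type theorem and the lower bound by an explicit construction centered on a vertex of maximum degree.

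For the upper bound, the strategy is immediate via duality. By Theorem \ref{Th-Gallai-Roman-Strong} we have $\partial_s(G)=\n(G)-\gamma_{_I}(G)$, and Theorem \ref{LowerBound-Chellai} gives $\gamma_{_I}(G)\ge \frac{2\n(G)}{\Delta(G)+2}$. Substituting, I would compute
\begin{align*}
\partial_s(G)=\n(G)-\gamma_{_I}(G) &\le \n(G)-\frac{2\n(G)}{\Delta(G)+2}\\
&=\frac{\n(G)\Delta(G)}{\Delta(G)+2}.
\end{align*}
The only subtlety is that Theorem \ref{LowerBound-Chellai} is stated for connected graphs; the remark following it asserts the bound holds for disconnected graphs as well, so I would invoke that extension (or argue it directly, noting that $\gamma_{_I}$ is additive over components while the bound is concave in a way that is preserved under taking disjoint unions).

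For the lower bound, I would use the natural test set $D=\{v\}$, where $v$ is a vertex with $\deg(v)=\Delta(G)$. Then $N_e(D)=N(v)$, so $|N_e(D)|=\Delta(G)$. The question is whether $v$ is a weak defender: since every neighbour $u$ of $v$ satisfies $N(u)\cap D=\{v\}$, indeed $\epn(v,D)=N(v)\ne\varnothing$ (assuming $\Delta(G)\ge 1$), so $v\in D_w$ and $|D_w|=1$. Hence $\partial_s(D)=|N_e(D)|-|D_w|=\Delta(G)-1$, and therefore $\partial_s(G)\ge \partial_s(D)=\Delta(G)-1$.

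I expect no serious obstacle here; the main point to verify carefully is the disconnected-graph case of Theorem \ref{LowerBound-Chellai} used in the upper bound, and the trivial edge case $\Delta(G)=0$ (an edgeless graph), where $\partial_s(G)=0\ge -1=\Delta(G)-1$ holds vacuously. Both bounds are expected to be tight: the lower bound is attained by stars $K_{1,r}$, and the upper bound should be examined on graphs where the Chellali bound on $\gamma_{_I}$ is sharp.
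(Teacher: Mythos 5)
Your proof is correct and follows essentially the same route as the paper: the lower bound via $\partial_s(G)\ge\partial_s(\{v\})=\Delta(G)-1$ for a vertex $v$ of maximum degree, and the upper bound by combining Theorem \ref{Th-Gallai-Roman-Strong} with Theorem \ref{LowerBound-Chellai} (whose extension to disconnected graphs the paper also notes). Your extra care with the case $\Delta(G)=0$ and with verifying $v\in D_w$ is a welcome but minor addition.
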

\begin{proof}
To prove the lower bound we only need to observe that for every vertex $v$ of maximum degree,  $\partial_s(G)\ge \partial_s(\{v\})=\Delta(G)-1$.

%We proceed  to deduce the  upper bound. 
%By Lemma \ref{Strong-dif-dominante}, there exists  $D\in \mathcal{D}(G)$ which is  a $\partial_s(G)$-set.  Let $X=\cup_{v\in D_w}\epn(v,D)$. Notice that $|X|\leq \Delta(G)|D_w|$. Moreover, since every vertex $v\in N_e(D)\setminus X$ satisfies that $|N(v)\cap D_s|\geq 2$, we obtain that $2|N_e(D)\setminus X|\leq \Delta(G)|D_s|$. Hence, since $D\in \mathcal{D}(G)$,
%\begin{align*}
%\n(G)=&|D|+|N_e(D)|\\
%\leq &|D_w|+|D_s|+ \Delta(G)|D_w|+\frac{1}{2}\Delta(G)|D_s|\\
%\leq &(\Delta(G)+1)|D_w| + \frac{1}{2}(\Delta(G)+2)|D_s|\\
%\leq &\frac{1}{2}(\Delta(G)+2)(2|D_w|+|D_s|)\\
%= &\frac{1}{2}(\Delta(G)+2)(\n(G)-\partial_s(G)).
%\end{align*}
%Therefore, $\partial_s(G)\leq \frac{\n(G)\Delta(G)}{\Delta(G)+2}$, as desired.
The upper bound is a direct consequence of Theorems \ref{Th-Gallai-Roman-Strong} and \ref{LowerBound-Chellai}.
\end{proof}

%---------------------------------
It is well known that $\gamma_{_2}(G)\ge\frac{2\n(G)}{\Delta(G)+2}$ for every graph $G$. The next result shows that the case $\gamma_{_2}(G)=\frac{2\n(G)}{\Delta(G)+2}$ characterises the graphs with $\partial_s(G)= \frac{\n(G)\Delta(G)}{\Delta(G)+2}$. Consequently, these graphs satisfy 
$\partial_s(G)=\n(G)-\gamma_{_2}(G)$.

\begin{proposition} \label{CharactEqBoundMaxDegree}
Given a graph $G$,  the following statements are equivalent.
 \begin{enumerate}[{\rm (i)}]
 \item $\partial_s(G)= \frac{\n(G)\Delta(G)}{\Delta(G)+2}$.
 \item $\gamma_{_2}(G)=\frac{2\n(G)}{\Delta(G)+2}$. 
 \end{enumerate}
\end{proposition}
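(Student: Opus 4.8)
The plan is to use the Gallai-type identity (Theorem~\ref{Th-Gallai-Roman-Strong}) to translate the statement about $\partial_s(G)$ into one about $\gamma_{_I}(G)$, and then to compare $\gamma_{_I}(G)$ with $\gamma_{_2}(G)$. Since $\partial_s(G)=\n(G)-\gamma_{_I}(G)$, a one-line computation shows that $\partial_s(G)=\frac{\n(G)\Delta(G)}{\Delta(G)+2}$ holds if and only if $\gamma_{_I}(G)=\n(G)-\frac{\n(G)\Delta(G)}{\Delta(G)+2}=\frac{2\n(G)}{\Delta(G)+2}$. Thus it suffices to prove that $\gamma_{_I}(G)=\frac{2\n(G)}{\Delta(G)+2}$ is equivalent to statement (ii).

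First I would record the chain of inequalities that drives the whole argument. Combining Theorem~\ref{LowerBound-Chellai}, the third item of Theorem~\ref{LowerBoundTrees}, and the standard bound $\gamma_{_2}(G)\ge\frac{2\n(G)}{\Delta(G)+2}$ quoted just before the proposition, one has
$$\frac{2\n(G)}{\Delta(G)+2}\le\gamma_{_I}(G)\le\gamma_{_2}(G),\qquad \frac{2\n(G)}{\Delta(G)+2}\le\gamma_{_2}(G).$$
With this in hand the implication (ii)$\Rightarrow$(i) is immediate: if $\gamma_{_2}(G)=\frac{2\n(G)}{\Delta(G)+2}$, then $\gamma_{_I}(G)$ is squeezed between $\frac{2\n(G)}{\Delta(G)+2}$ and $\gamma_{_2}(G)=\frac{2\n(G)}{\Delta(G)+2}$, forcing $\gamma_{_I}(G)=\frac{2\n(G)}{\Delta(G)+2}$.

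The hard part will be the reverse implication, because the inequality $\gamma_{_I}(G)\le\gamma_{_2}(G)$ points the wrong way; to conclude $\gamma_{_2}(G)\le\gamma_{_I}(G)$ I must extract structural information from the equality case of the Chellali bound. So I would reprove Theorem~\ref{LowerBound-Chellai} explicitly for a $\gamma_{_I}(G)$-function $f(V_0,V_1,V_2)$. Double counting the total weight received by the vertices of $V_0$ gives
$$2|V_0|\le\sum_{v\in V_0}f(N(v))=\sum_{u\in V(G)}f(u)\,|N(u)\cap V_0|\le\Delta(G)\,\omega(f),$$
and combining this with $|V_1|+|V_2|\le|V_1|+2|V_2|=\omega(f)$ yields $\n(G)=|V_0|+|V_1|+|V_2|\le\frac{\Delta(G)+2}{2}\,\omega(f)$. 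When $\gamma_{_I}(G)=\frac{2\n(G)}{\Delta(G)+2}$, equality must hold throughout; in particular the equality $|V_1|+|V_2|=|V_1|+2|V_2|$ forces $V_2=\varnothing$. The delicate point, and the only real obstacle, is carrying out this equality analysis carefully enough to pin down $V_2=\varnothing$.

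Finally I would observe that once $V_2=\varnothing$, the Italian condition $f(N(v))\ge2$ for each $v\in V_0$ reads $|N(v)\cap V_1|\ge2$, so $V_1$ is a $2$-dominating set of $G$ with $|V_1|=\omega(f)=\gamma_{_I}(G)=\frac{2\n(G)}{\Delta(G)+2}$. Hence $\gamma_{_2}(G)\le\frac{2\n(G)}{\Delta(G)+2}$, and together with the standard lower bound $\gamma_{_2}(G)\ge\frac{2\n(G)}{\Delta(G)+2}$ this gives $\gamma_{_2}(G)=\frac{2\n(G)}{\Delta(G)+2}$, which is precisely (ii) and completes the proof.
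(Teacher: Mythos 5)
Your proof is correct, and it takes a genuinely different route from the paper's. For (ii)$\Rightarrow$(i) the two arguments coincide (a squeeze between the lower bound $\partial_s(G)\ge\n(G)-\gamma_{_2}(G)$ of Theorem \ref{Prop-RElat-Differentials} and the upper bound of Theorem \ref{Prop-RElat-Differentials3}). For the hard direction, however, the paper never passes to Italian domination: it takes a dominating $\partial_s(G)$-set $D$ from Lemma \ref{Strong-dif-dominante}, splits $N_e(D)$ into the private-neighbour part $X=\cup_{v\in D_w}\epn(v,D)$ and its complement, counts $|X|\le\Delta(G)|D_w|$ and $2|N_e(D)\setminus X|\le\Delta(G)|D_s|$, derives $\n(G)\le\frac{1}{2}(\Delta(G)+2)(\n(G)-\partial_s(G))$, reads off $D_w=\varnothing$ from the equality case, and closes with Proposition \ref{CondNecSufEqualityDifs}. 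Your argument is precisely the image of this under the Gallai correspondence $D_w\leftrightarrow V_2$, $D_s\leftrightarrow V_1$: you transport everything to $\gamma_{_I}$ via Theorem \ref{Th-Gallai-Roman-Strong}, redo the count as a weight double-count over a $\gamma_{_I}(G)$-function, and extract $V_2=\varnothing$ from tightness. What your version buys is that it is self-contained (you reprove Theorem \ref{LowerBound-Chellai} rather than citing it, and the equality analysis comes for free) and slightly cleaner: your inequality $\sum_u f(u)\,|N(u)\cap V_0|\le\Delta(G)\,\omega(f)$ is immediate, whereas the paper's intermediate claim that every $v\in N_e(D)\setminus X$ has two neighbours in $D_s$ (rather than merely two in $D$) needs an extra word of justification. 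What the paper's version buys is that it stays inside the strong-differential framework, in keeping with the article's stated aim of avoiding functions. Finally, the step you flag as delicate is not actually an obstacle: once the chain $\n(G)\le\frac{\Delta(G)}{2}\omega(f)+|V_1|+|V_2|\le\frac{\Delta(G)+2}{2}\omega(f)$ is tight, the equality $|V_1|+|V_2|=|V_1|+2|V_2|$ gives $V_2=\varnothing$ in one line, exactly as you wrote.
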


\begin{proof}
If $\gamma_{_2}(G)=\frac{2\n(G)}{\Delta(G)+2}$, then Theorems \ref{Prop-RElat-Differentials} and \ref{Prop-RElat-Differentials3} immediately lead to $\partial_s(G)= \frac{\n(G)\Delta(G)}{\Delta(G)+2}$.

In order to prove the converse, let  $D$ be a $\partial_s(G)$-set  which satisfies Lemma \ref{Strong-dif-dominante}.  Let $X=\cup_{v\in D_w}\epn(v,D)$. Notice that $|X|\leq \Delta(G)|D_w|$. Moreover, since every vertex $v\in N_e(D)\setminus X$ satisfies that $|N(v)\cap D_s|\geq 2$, we obtain that $2|N_e(D)\setminus X|\leq \Delta(G)|D_s|$. Hence, since $D\in \mathcal{D}(G)$,
\begin{align*}
\n(G)=&|D|+|N_e(D)|\\
\leq &|D_w|+|D_s|+ \Delta(G)|D_w|+\frac{1}{2}\Delta(G)|D_s|\\
\leq &(\Delta(G)+1)|D_w| + \frac{1}{2}(\Delta(G)+2)|D_s|\\
\leq &\frac{1}{2}(\Delta(G)+2)(2|D_w|+|D_s|)\\
= &\frac{1}{2}(\Delta(G)+2)(\n(G)-\partial_s(G)).
\end{align*}
Thus, if $\partial_s(G)= \frac{\n(G)\Delta(G)}{\Delta(G)+2}$, then we have equalities in the previous inequality chain, which implies that $D_w=\varnothing$. Therefore, by Proposition \ref{CondNecSufEqualityDifs}, $\frac{\n(G)\Delta(G)}{\Delta(G)+2}=\partial_s(G)=\n(G)-\gamma_{_2}(G)$, which implies that $\gamma_{_2}(G)=\frac{2\n(G)}{\Delta(G)+2}$.  
\end{proof}

To conclude this subsection we characterize the trees for which the bound $\partial_s(G)\ge \Delta(G)-1$ is achieved. To this end, we need to introduce some additional notation. 

Given a tree $T$ and $v\in V(T)$, the {eccentricity} of $v$ is denoted by  $ecc(v)$. We also   define the following sets associated to $v$,
$$\mathcal{L}(v)=N(v)\cap \mathcal{L}(T) \,\, \text{ and  }\,\, \mathcal{S}_2(v)=\{u\in N(v):\, \deg(u)=2\}.$$

We say that a tree $T$ belongs to the family $\mathcal{T}$ if the following conditions hold for every vertex $v\in V(T)$ with $\deg(v)=\Delta(T)$.
\begin{itemize}
\item[{\rm A.1:}] $v\in \mathcal{S}(T)$ and $ecc(v)\le 3$. 
\item[{\rm A.2:}] $\deg(u)\le 3$ and $|\mathcal{S}_2(u)|\le 1$ for every $u\in N(v)$.
\item[{\rm A.3:}] $\deg(u)\le 2$ for every $u\in V(T)\setminus N[v]$.

\item[{\rm A.4:}]  $|\mathcal{L}(v)|\ge 2$ or there exists $u\in N(v)\cap \mathcal{S}(T)$ such that $\deg(u)=2$.
\end{itemize}

\begin{lemma}\label{LemmaTrees}
Let $T\in \mathcal{T}$ and $v\in V(T)$. If  $\deg(v)=\Delta(T)$, then there exists a $\partial_s(T)$-set $D$ such that $v\in D_w$. 
\end{lemma}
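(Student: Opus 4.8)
The plan is to produce the desired set by starting from a carefully chosen optimal dominating set and performing local surgery around $v$. By Lemma~\ref{Strong-dif-dominante} and Remark~\ref{eq-2}, I may restrict attention to dominating sets, since for a dominating $D$ we have $N_e(D)=V(T)\setminus D$ and hence $\partial_s(D)=\n(T)-(|D|+|D_w|)$. Writing $\mu(D):=|D|+|D_w|=|D_s|+2|D_w|$, a dominating set is a $\partial_s(T)$-set precisely when it minimizes $\mu$. So it suffices to produce a dominating set of minimum $\mu$ that contains $v$ with $\epn(v,D)\ne\varnothing$, i.e.\ with $v\in D_w$.

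First I would dispose of the easy reduction: it is enough to find a minimum-$\mu$ dominating set merely containing $v$. Indeed, suppose $D$ is such a set. If $\epn(v,D)\ne\varnothing$ we are done; otherwise no leaf of $v$ can lie outside $D$ (a leaf off $D$ would have $v$ as its unique $D$-neighbour, hence be a private neighbour of $v$), so $\mathcal{L}(v)\subseteq D$, and $\mathcal{L}(v)\ne\varnothing$ since $v\in\mathcal{S}(T)$ by A.1. Deleting one leaf $\ell\in\mathcal{L}(v)$ keeps $D$ dominating (its only neighbour $v$ remains), turns $\ell$ into a private neighbour of $v$ (so $v\in D_w$), and, because $\ell$ is adjacent to $v$ alone, affects no other $\epn(\cdot,D)$. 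The unit lost by deleting $\ell$ is exactly the unit gained when $v$ enters $D_w$, so $\mu$ is unchanged and optimality is preserved.

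For the remaining claim I start from a minimum-$\mu$ dominating set $D$ with $v\notin D$; then $\mathcal{L}(v)\subseteq D$ as above, and I consider the swap $D^{\ast}=(D\setminus\mathcal{L}(v))\cup\{v\}$. Adding $v$ to a dominating set can only shrink the private neighbourhoods of other vertices (a vertex $w\in N(v)$ that was private to some $x$ now gains $v$ as a second $D$-neighbour), and deleting leaves of $v$ touches only the status of $v$; so the only vertex that can \emph{become} a weak defender is $v$ itself, and all other changes can only lower the weak count. Moreover every affected vertex lies within distance $2$ of $v$, where by A.2 the neighbours of $v$ have degree at most $3$ and by A.3 the vertices outside $N[v]$ have degree at most $2$, so the finitely many local configurations are explicitly listable and side-effect free. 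Consequently $\mu(D^{\ast})\le \mu(D)-|\mathcal{L}(v)|+2$, and $v\in D^{\ast}_w$ (each deleted leaf is now a private neighbour of $v$). When $|\mathcal{L}(v)|\ge 2$ this already yields $\mu(D^{\ast})\le\mu(D)$, finishing the argument.

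The main obstacle is the case $|\mathcal{L}(v)|=1$, where the naive swap costs one extra unit. Here A.4 provides a neighbour $u\in N(v)\cap\mathcal{S}(T)$ with $\deg(u)=2$, so the branch at $u$ is the short path $v-u-\ell_u$ with $\ell_u$ a leaf. The plan is to reclaim the missing unit from this branch: a direct check of the (few) admissible local configurations shows that when $v\notin D$ the pair $\{u,\ell_u\}$ necessarily contributes $2$ to $\mu$, whereas once $v\in D^{\ast}$ dominates $u$ one may instead keep only $\ell_u$ in $D$ (leaving $u$ out), which still dominates $\ell_u$ and, because $N(u)\cap D^{\ast}=\{v,\ell_u\}$, leaves $\ell_u$ a strong defender, for a branch contribution of just $1$. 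Performing this reconfiguration together with the swap, the penalty from $v$ entering $D^{\ast}_w$ is exactly cancelled by the unit saved on the branch, giving $\mu(D^{\ast})=\mu(D)$ with $v\in D^{\ast}_w$. The delicate step is verifying that this combined surgery creates no hidden weak defender: since $\deg(\ell_u)=1$, $\deg(u)=2$, and $|\mathcal{S}_2(u)|\le1$ by A.2, the only vertices whose private neighbourhoods change are $v,u,\ell_u$ and (via the addition of $v$) the controlled vertices within distance $2$ of $v$. Combining the two subcases produces a minimum-$\mu$ dominating set containing $v$, and the first reduction then yields the required $\partial_s(T)$-set with $v\in D_w$.
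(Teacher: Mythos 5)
Your proposal is correct and follows essentially the same route as the paper: the paper also performs exactly these two exchanges, replacing $D$ by $(D\setminus\mathcal{L}(v))\cup\{v\}$ when $|\mathcal{L}(v)|\ge 2$, and by $\{v\}\cup\mathcal{L}(u)\cup D\setminus(\mathcal{L}(v)\cup\{u\})$ when $|\mathcal{L}(v)|=1$ using the degree-two support neighbour $u$ supplied by A.4. The only differences are presentational — the paper argues by contradiction on an arbitrary $\partial_s(T)$-set, while you argue directly by minimizing $|D|+|D_w|$ over dominating sets via Lemma~\ref{Strong-dif-dominante} and Remark~\ref{eq-2} — and your accounting of the branch contributions is sound.
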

\begin{proof}

Suppose to the contrary that $v\notin D_w$ for every $\partial_s(T)$-set $D$. Let us fix a $\partial_s(T)$-set $D$. Notice that $v\in \mathcal{S}(T)$, by A.1. If $|\mathcal{L}(v)|\geq 2$, then $\partial_s(\{v\}\cup (D\setminus \mathcal{L}(v))\geq \partial_s(D)$, which is a contradiction. Now, if $|\mathcal{L}(v)|=1$, then by A.4   there exists $u\in N(v)\cap \mathcal{S}(T)$ such that $\deg(u)=2$. So, for $X=\{v\}\cup \mathcal{L}(u)\cup D\setminus (\mathcal{L}(v)\cup\{u\})$ we have that $\partial_s(X)\geq \partial_s(D)$, which is again a contradiction, as $X$ is a $\partial_s(T)$-set and $v\in X_w$. Therefore, the result follows. 
\end{proof}

\begin{proposition}
Let $T$ be a  tree with $\n(T)\ge 3$. Then  $\partial_s(T)=\Delta(T)-1$ if and only if $T\in \mathcal{T}$.
\end{proposition}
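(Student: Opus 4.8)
The lower bound $\partial_s(T)\ge \Delta(T)-1$ is already available from Theorem \ref{Prop-RElat-Differentials3}, so the whole content of the statement is the matching upper bound $\partial_s(T)\le \Delta(T)-1$ for $T\in\mathcal{T}$ (sufficiency) together with the structural necessity of the conditions A.1--A.4. The plan is to prove the two implications separately, exploiting in both a single bookkeeping identity. Fix a maximum-degree vertex $v$ and root $T$ at $v$; writing $B_u$ for the subtree hanging from a neighbour $u\in N(v)$ (so that $V(T)=\{v\}\sqcup\bigsqcup_{u\in N(v)}B_u$), for any dominating set $D$ with $v\in D\cap D_w$ one has $N_e(D)=\bigsqcup_u(N_e(D)\cap B_u)$ and $D_w=\{v\}\sqcup\bigsqcup_u(D_w\cap B_u)$, because every external private neighbour of a vertex of $B_u$ again lies in $B_u$. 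Setting $c_u=|N_e(D)\cap B_u|-|D_w\cap B_u|$, a direct count then gives $\partial_s(D)=-1+\sum_{u\in N(v)}c_u$. Since there are exactly $\Delta(T)$ branches, everything reduces to controlling the single quantity $c_u$ branch by branch.

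For \emph{sufficiency}, assume $T\in\mathcal{T}$. By Lemma \ref{LemmaTrees} there is a $\partial_s(T)$-set $D$ with $v\in D_w$, and by Lemma \ref{Strong-dif-dominante} I may take it dominating. It then suffices to prove $c_u\le 1$ for every branch, since this yields $\partial_s(T)=\partial_s(D)=-1+\sum_u c_u\le \Delta(T)-1$. Here the hypotheses A.1--A.3 do the work: A.1 (eccentricity at most $3$) forces every $B_u$ to have depth at most $2$; A.2--A.3 bound all internal degrees; and in particular $|\mathcal{S}_2(u)|\le 1$ allows $u$ to carry at most one pendant path $u-w-\ell$. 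Consequently each $B_u$ is one of a short, explicit list of shapes (a single vertex; $u$ with one or two pendant leaves; $u$ with a single pendant path, possibly plus one pendant leaf), and for each shape a direct inspection of the admissible dominating placements shows $c_u\le 1$.

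For \emph{necessity}, I would argue by contraposition: assuming some maximum-degree vertex $v$ violates one of A.1--A.4, I exhibit a placement $D$ with $\partial_s(D)\ge\Delta(T)$, i.e.\ with $\sum_u c_u\ge \Delta(T)+1$. The identity above localises this: it is enough to produce one branch with $c_u\ge 2$ while choosing the remaining placements so that every other branch can be made to contribute $c_u\ge 1$ (for instance by leaving a leaf-neighbour of $v$ outside $D$). A failure of A.2 or A.3 supplies exactly such a branch: two pendant paths at a single $u$, or a path of length $\ge 3$ (forced when $ecc(v)\ge 4$ violates A.1), each admit a \emph{strong-defender} placement---pushing tokens onto the leaves and onto the branching vertex so that $D_w\cap B_u=\varnothing$---with $c_u=2$. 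The non-support case of A.1 and the failure of A.4 are handled in the same spirit, the point being that the excluded configuration is precisely one that lets us either create a weight-$2$ branch or make $v$ itself a strong defender, thereby saving its penalty.

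The main obstacle is the necessity direction. The naive improving sets (such as $\{v,u\}$ or unions of stars) typically only recover $\partial_s=\Delta(T)-1$; one genuinely needs the strong-defender placements that spread weight onto the leaves so that $D_w$ becomes empty on the exploited branch, and verifying $c_u=2$ for these while simultaneously keeping all other branches at $c_u\ge1$ is the delicate part. A secondary nuisance is the bookkeeping in the sufficiency proof: one must check that in each admissible branch shape no placement of tokens inside the branch raises $c_u$ above $1$, which requires enumerating the dominating configurations of the (few) branch types rather than invoking a single generic estimate.
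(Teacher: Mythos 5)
Your branch decomposition $V(T)=\{v\}\sqcup\bigsqcup_{u\in N(v)}B_u$ and the identity $\partial_s(D)=-1+\sum_{u}c_u$ (for $v\in D\cap D_w$) give a legitimate and genuinely different route to the \emph{sufficiency} half: the paper instead takes a $\partial_s(T)$-set with $v\in D_w$ of \emph{minimum cardinality} and performs surgery on the subtrees $T_{v'}$ hanging from $N(v)\cap D$, whereas your finite enumeration of the branch shapes forced by A.1--A.3 is more systematic and does yield $c_u\le 1$. One caveat: you cannot stack Lemma \ref{Strong-dif-dominante} on top of Lemma \ref{LemmaTrees} --- adding vertices to restore domination can absorb the private neighbours of $v$ and eject $v$ from $D_w$, so the two lemmas do not compose. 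This is harmless only because $c_u\le 1$ in fact holds for \emph{every} subset of each admissible branch, but then your enumeration must cover the non-dominating placements as well, not only the ``admissible dominating'' ones you announce.

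The genuine gap is in the \emph{necessity} direction, at condition A.4. Your localisation presupposes $v\in D$ (either $\partial_s(D)=-1+\sum c_u$ or, for $v\in D_s$, $\partial_s(D)=\sum c_u$), and when A.1--A.3 hold but A.4 fails there is \emph{no} witness of this form. Take $\Delta(T)=3$ with $N(v)=\{\ell_0,u_1,u_2\}$, where $\ell_0$ is a leaf and each $u_i$ has two leaf children $a_i,b_i$ (so $\n(T)=8$ and A.4 fails). Every branch has $c_u\le 1$; moreover $\ell_0\in D$ forces $c_{\ell_0}=0$, while $\ell_0\notin D$ forces $\ell_0\in\epn(v,D)$ and hence $v\in D_w$. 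Either way any $D$ containing $v$ gives $\partial_s(D)\le 2=\Delta(T)-1$, yet $\partial_s(T)=3$ (one checks $\gamma_{_I}(T)=5$), witnessed only by sets omitting $v$, e.g.\ $D=\{u_1,a_2,b_2,\ell_0\}$ with $N_e(D)=\{a_1,b_1,v,u_2\}$ and $D_w=\{u_1\}$. So ``make $v$ a strong defender'' does not cover this case: you must either redo the bookkeeping for $v\notin D$ (where $v$ itself enters $N_e(D)$ and the branches interact through who privately dominates $v$), or construct the explicit witness as the paper does with its set $X=X'\cup U''$. The other necessity cases (failures of A.1, A.2, A.3) are fine as sketched; the sets $\{v,u\}$, $\{v,v'\}$, $\{v\}\cup N(\mathcal{S}_2(u))$ and $V(T)\setminus N(v)$ do produce either a branch with $c_u\ge 2$ or a configuration with $v\in D_s$, matching the paper's constructions.
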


\begin{proof}
Assume $\partial_s(T)=\Delta(T)-1$ and let $v\in V(T)$ with $\deg(v)=\Delta(T)$. First, we proceed to prove that A.1 holds. If $v\not\in \mathcal{S}(T)$, then $\partial_s(T)\geq \partial_s(V(T)\setminus N(v))= \Delta(T)$, which is a contradiction. Hence, $v\in \mathcal{S}(T)$. Now, if $ecc(v)\geq 4$, then there exists a vertex $v'\in V(T)$, which is at distance three from $v$, such that $\deg(v')\geq 2$. Notice that  $\partial_s(\{v,v'\})\geq \Delta(T)$, which is again a contradiction. Hence, $ecc(v)\leq 3$, as desired. 

Now, we proceed to prove that A.2 holds. Suppose that there exists $u\in N(v)$ such that $\deg(u)\geq 4$. In this case,  $\partial_s(\{v,u\})\geq \Delta(T)$, which is a contradiction. Now,  if there exists $u\in N(v)$ such that $|\mathcal{S}_2(u)|\ge 2$, then $\partial_s(\{v\}\cup N(\mathcal{S}_2(u)) )\ge \Delta(T)$, which is a contradiction. Therefore, A.2 follows.

From A.1 and A.2, if $T$ is a path, then $T\cong P_3$ or $T\cong P_4$. Hence, from now on we assume that $\Delta(T)\ge 3$. 
 
Now, if there exists $u\in V(T)\setminus N[v]$ such that $\deg(u)\geq 3$, then  $\partial_s(\{v,u\})\geq \Delta(T)$, which is a contradiction. Therefore,  A.3 follows.

Finally,  we proceed to prove that A.4 holds. Suppose  to the contrary  that
$|\mathcal{L}(v)|=1$ and for every   $u\in N(v)\setminus \mathcal{L}(v)$ either $\mathcal{S}_2(u)\ne\varnothing$ or $\mathcal{S}_2(u)=\varnothing$ and $\deg(u)=3$. Let $U'=\{u\in N(v):\, \mathcal{S}_2(u)\ne\varnothing\}$ and 
$U''=N(v)\setminus (U'\cup \mathcal{L}(v))$. Notice that if $u\in U'$, then $|\mathcal{S}_2(u)|=1$ and $|\mathcal{L}(u)|\le 1$, by A.2. Also, if $u\in U''$, then $|\mathcal{L}(u)|=2$. Now, let $X=X'\cup U''$, where $$X'=\mathcal{L}(v)\cup \left(\bigcup_{u\in U'}(\{u\}\cup \mathcal{S}_2(u)\cup \mathcal{L}(u))\right).$$
Since $X_s=X'$ and $X_w=U''$, we have that $\partial_s(X)=|N_e(X)|-|X_w|=(1+|X'|+2|U''|)-|U''|=1+|X'|+|U''|=\Delta(v)$, which is a contradiction. Therefore,  
A.4 follows.

%According to the previous analysis, we conclude that $T\in \mathcal{T}$.

Conversely, assume that $T\in \mathcal{T}$. By Theorem \ref{Prop-RElat-Differentials3}, we only need to prove that $\partial_s(T)\le \Delta(T)-1$. As above, let $v\in V(T)$ with $\deg(v)=\Delta(T)$. By Lemma \ref{LemmaTrees}, there exists 
$\partial_s(T)$-set $D$ such that $v\in D_w$. We take $D$ of minimum cardinality among these sets.   Now, if $N(v)\cap D=\varnothing$, then A.1 and A.3 lead to $|N(x)\cap D |\le 1$ for every $x\in  V(T)\setminus N[v]$, which implies that $\partial_s(T)=\partial_s(D)\le \partial_s(\{v\})=\Delta(T)-1$. Now, suppose to the contrary that there exists  $v'\in N(v)\cap D$.  Let $T_{v'}$ be  the sub-tree of $T$ with root $v'$ obtained from $T$ by removing the edge $\{v,v'\}$. By A.2 we have that $\deg(v')\le 3$ and $|\mathcal{S}_2(v')|\le 1$. 
Now, if $\deg(v')\le 2$, then by A.1 either $T_{v'}$ is a trivial tree or $T_{v'}\cong P_2$. Hence, $\partial_s(T)=\partial_s(D)<\partial_s(D\setminus (\{v'\}\cup \mathcal{L}(v')))$, which is a contradiction. 
Finally, if $\deg(v')=3$, 
then either $T_{v'}\cong P_3$ or $T_{v'}\cong P_4$, and so  $\partial_s(T)=\partial_s(D)\le\partial_s(D\setminus V(T_{v'}))$, which is a contradiction, as $|D|>|D\setminus V(T_{v'})|$. 
Therefore, the result follows.
\end{proof}

%-------------

\subsection{Bounds in terms of vertex cover number  and independence number}

If $G$ is a graph with $\delta(G)\ge 2$, then any vertex cover $S$ is a $2$-dominating set, as $V(G)\setminus S$ is an independent set. Hence, $\delta(G)\ge 2$ leads to $\n(G)-\gamma_{_2}(G)\ge \n(G)-\beta(G)=\alpha(G)$. Therefore, the following result is a direct consequence of  Theorem \ref{Prop-RElat-Differentials}.

\begin{corollary}\label{CorollaryLOwerBoundCover}
If $G$ is a graph with $\delta(G)\ge 2$, then $\partial_s(G)\ge \alpha(G)$.
\end{corollary}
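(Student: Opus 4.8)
The plan is to reduce the statement to the lower bound of Theorem \ref{Prop-RElat-Differentials} together with Gallai's theorem (Theorem \ref{th_gallai}). Observe first that the lower bound of Theorem \ref{Prop-RElat-Differentials} gives $\partial_s(G)\ge \n(G)-\min\{2\gamma(G),\gamma_{_2}(G)\}\ge \n(G)-\gamma_{_2}(G)$, since the minimum never exceeds $\gamma_{_2}(G)$. Hence it suffices to bound $\gamma_{_2}(G)$ from above by the vertex cover number $\beta(G)$, because then $\n(G)-\gamma_{_2}(G)\ge \n(G)-\beta(G)=\alpha(G)$ by Gallai's theorem.

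First I would take a minimum vertex cover $S$ of $G$, so that $|S|=\beta(G)$. The only nontrivial step — and the one place where the hypothesis $\delta(G)\ge 2$ is genuinely used — is to verify that $S$ is a $2$-dominating set. Since $S$ is a vertex cover, its complement $V(G)\setminus S$ is an independent set, so for any $v\in V(G)\setminus S$ every neighbour of $v$ must lie in $S$. The hypothesis $\delta(G)\ge 2$ then guarantees $\deg(v)\ge 2$, and therefore $|N(v)\cap S|=\deg(v)\ge 2$. As this holds for every $v\notin S$, the set $S$ is indeed $2$-dominating.

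Consequently $\gamma_{_2}(G)\le |S|=\beta(G)$, which yields $\n(G)-\gamma_{_2}(G)\ge \n(G)-\beta(G)=\alpha(G)$. Chaining this with the lower bound extracted above from Theorem \ref{Prop-RElat-Differentials} gives
$$\partial_s(G)\ge \n(G)-\gamma_{_2}(G)\ge \alpha(G),$$
which is the desired inequality.

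I do not expect any real obstacle here: once one notices that a vertex cover is automatically $2$-dominating in the absence of vertices of degree at most one, the result is an immediate corollary of the already-established bound $\partial_s(G)\ge \n(G)-\gamma_{_2}(G)$ and the classical Gallai identity. The minimum-degree condition enters solely to secure the \emph{second} neighbour in $S$ required by $2$-domination, so relaxing it to $\delta(G)\ge 1$ would break exactly this step.
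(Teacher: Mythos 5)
Your argument coincides with the paper's: the authors also observe that when $\delta(G)\ge 2$ every vertex cover is a $2$-dominating set (since its complement is independent and each external vertex then has at least two neighbours in the cover), deduce $\n(G)-\gamma_{_2}(G)\ge \n(G)-\beta(G)=\alpha(G)$ via Gallai's theorem, and conclude from the lower bound of Theorem \ref{Prop-RElat-Differentials}. Your proof is correct and essentially identical in structure and in where the minimum-degree hypothesis is used.
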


The bound above is tight. For instance, the graph $G$  shown in Figure \ref{Fig-Example-1} satisfies $\partial_s(G)=4= \alpha(G)$.

The following result, which states that 
$\partial_s(G)\geq \frac{1}{2}(\n(G)-\alpha(G))=\frac{1}{2}\beta(G),$ is not restricted to the case of graphs with $\delta(G)\ge 2$.

\begin{theorem}\label{BoundCover,gamma}
Let $G$ be a graph. If every component of $G$ has maximum degree at least two, then $$\partial_s(G)\geq \frac{1}{2}\beta(G).$$
\end{theorem}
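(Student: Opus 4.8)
The plan is to exhibit a single dominating set $D$ with no weak defenders (that is, a $2$-dominating set, so that $D_w=\varnothing$) whose cardinality is at most $\alpha(G)+\tfrac12\beta(G)$. Indeed, for any such $D$ we have $\partial_s(G)\ge\partial_s(D)=|N_e(D)|-|D_w|=\n(G)-|D|$, and since $\n(G)-\alpha(G)=\beta(G)$ by Gallai's theorem (Theorem~\ref{th_gallai}), this immediately yields $\partial_s(G)\ge \n(G)-\alpha(G)-\tfrac12\beta(G)=\tfrac12\beta(G)$.

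To build $D$, I would first fix a maximum independent set $I$ chosen so that $\mathcal{L}(G)\subseteq I$. Such a set exists because the hypothesis on the components forbids $K_1$ and $K_2$ components, so every support vertex has degree at least two, and the standard swap (exchanging a support vertex lying in $I$ for its leaves) never decreases the size of an independent set. Put $S=V(G)\setminus I$, a minimum vertex cover with $|S|=\beta(G)$ that contains no leaf, so every vertex of $S$ has degree at least two. Two features of this choice are crucial: since $I$ is a maximal independent set it is dominating, hence \emph{every} vertex of $S$ has a neighbour in $I$; and since $S$ contains no leaf, a vertex of $S$ whose neighbours all lie in $I$ actually has at least two neighbours in $I$. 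Now split $S=B\cup A$, where $B=\{s\in S:\,N(s)\subseteq I\}$ and $A=S\setminus B$. Each $a\in A$ has a neighbour $s\in S$, and $s$ in turn has the $S$-neighbour $a$, so $s\in A$; hence $G[A]$ has no isolated vertices, and I would take $D_A$ to be a dominating set of $G[A]$ with $|D_A|\le \tfrac12|A|$, which exists because a graph without isolated vertices has a dominating set of at most half its order.

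Finally I set $D=I\cup D_A$ and check that $D$ is $2$-dominating. Every $b\in B$ has at least two neighbours in $I\subseteq D$; every $a\in A\setminus D_A$ has a neighbour in $I$ (by maximality of $I$) and a neighbour in $D_A$ (by the choice of $D_A$), and these are distinct since $I\cap A=\varnothing$. Thus every vertex outside $D$ has at least two neighbours in $D$, so $D_w=\varnothing$, while $|D|\le\alpha(G)+\tfrac12|A|\le\alpha(G)+\tfrac12\beta(G)$, which closes the argument through the displayed inequality above.

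The step I expect to be the main obstacle is precisely guaranteeing $D_w=\varnothing$ while keeping $|D|$ small: a priori, vertices of $S$ all of whose neighbours also lie in $S$ (the interior of the cover) could force many weak defenders, or an expensive $2$-domination of that interior. The resolution, and the heart of the proof, is the observation that taking $I$ to be a \emph{maximum} (hence maximal, hence dominating) independent set makes this interior empty, so that every vertex of $A$ is automatically doubly covered, once from $I$ and once from $D_A$; the complementary subtlety is forcing $\mathcal{L}(G)\subseteq I$ so that the vertices of $B$ are doubly covered as well. No connectivity assumption is needed, since $\alpha$, $\beta$ and $\n$ are additive over components and the hypothesis rules out the only troublesome ones.
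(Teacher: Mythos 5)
Your proof is correct and follows essentially the same route as the paper's: your $I$, $B$, $A$ and $D_A$ correspond exactly to the paper's maximum independent set $S$ containing all leaves, the set $C$ of isolated vertices of $G[V(G)\setminus S]$, its complement $V(G)\setminus(S\cup C)$, and the half-size dominating set $S'$ of that induced subgraph, with the same final set $D=I\cup D_A$. The only cosmetic differences are that you merge the paper's two cases into one and make explicit the $2$-domination argument and the leaf-swap justification that the paper leaves implicit.
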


\begin{proof}
Assume that every component of $G$ has maximum degree at least two. Let $S$ be an $\alpha(G)$-set containing all leaves. Let $C\subseteq V(G)\setminus S$ be the set of isolated vertices of  $G[V(G)\setminus S]$. If $V(G)=S\cup C$, then $$\partial_s(G)\geq \partial_s(S)=|N_e(S)|-|S_w|=\n(G)-\alpha(G)\geq \frac{1}{2}(\n(G)-\alpha(G))=\frac{1}{2}\beta(G). $$

Assume $V(G)\setminus (S\cup C)\neq \varnothing$. Let $S'$ be a $\gamma(G[V(G)\setminus (S\cup C)])$-set. Since the subgraph $G[V(G)\setminus (S\cup C)]$ does not have isolated vertices, $$|S'|\leq \frac{1}{2}(\n(G)-(|S|+|C|))\leq \frac{1}{2}(\n(G)-\alpha(G)).$$ Now, let $D=S\cup S'$. Notice that $D$ is a dominating set of $G$ and $D_w=\varnothing$. Hence,
\begin{align*}
\partial_s(G)&\ge \partial_s(D)\\
             &=|N_e(D)|\\
             &=\n(G)-|D|\\
             &\geq \n(G)-\left(\alpha(G)+\frac{1}{2}(\n(G)-\alpha(G))\right)\\
             &=\frac{1}{2}(\n(G)-\alpha(G))\\
             &=\frac{1}{2} \beta(G).
\end{align*} 
Therefore, the result follows.
\end{proof}

The bound given in Theorem \ref{BoundCover,gamma} is achieved  by  any corona graph of the form $G\cong G_1\odot K_2$, where $G_1$ is an arbitrary graph. In this case, $\beta(G)=2\n(G_1)$ and $\alpha(G)=\n(G_1)$, so that 
$\partial_s(G)=\n(G_1)=\frac{1}{2} \beta(G)=\frac{1}{2}(\n(G)-\alpha(G)).$

%%%%%%%%%%%
\subsection{Bounds in terms of order and semitotal domination number}

A {semitotal dominating set} of a graph $G$ without isolated vertices, is a dominating set $D$ of $G$ such that every vertex in $D$ is within distance two of another vertex of $D$. The {semitotal domination number}, denoted by $\gamma_{_{t2}}(G)$, is the minimum cardinality among all semitotal dominating sets of $G$. This parameter was introduced by Goddard et al.\ in \cite{GoddardSemiTotal}, and it was also studied in \cite{Semitotal2016a,Semitotal2016,Semitotal2019}.

Since every semitotal dominating set is also a dominating set, and every $2$-dominating set is a semitotal dominating set,  the next inequality chain holds.

\begin{equation}\label{eq-relation-domination-parameters}
\gamma(G)\leq \gamma_{_{t2}}(G)\leq \gamma_{_2}(G).
\end{equation}

The following result improves the upper bound given in Theorem \ref{Prop-RElat-Differentials} for graphs without isolated vertices.

\begin{theorem}\label{Semitotal}
For any graph without isolated vertices,
$$\partial_s(G)\leq \n(G)-\gamma_{_{t2}}(G).$$
\end{theorem}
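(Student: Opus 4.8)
The plan is to work directly with the strong differential and reduce the claim to the construction of a small semitotal dominating set. By Theorem \ref{Th-Gallai-Roman-Strong} the inequality is equivalent to $\gamma_{_{t2}}(G)\le \gamma_{_I}(G)$, but it is cleaner to argue as follows. By Lemma \ref{Strong-dif-dominante} I would fix a $\partial_s(G)$-set $D$ which is a dominating set, so that Remark \ref{eq-2} gives $\partial_s(G)=\n(G)-|D|-|D_w|$. Hence it suffices to exhibit a semitotal dominating set $S$ with $|S|\le |D|+|D_w|$; this immediately yields $\gamma_{_{t2}}(G)\le |D|+|D_w|=\n(G)-\partial_s(G)$, which is the desired bound.

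The key structural observation is that the only vertices of $D$ that can fail the semitotal condition are weak defenders. Call a vertex $v\in D$ \emph{bad} if no other vertex of $D$ lies within distance two of $v$. I would first show that a bad vertex $v$ has $N(v)\cap D=\varnothing$ and that every neighbour $w$ of $v$ satisfies $N(w)\cap D=\{v\}$; since $G$ has no isolated vertices, $v$ has at least one neighbour, so $\epn(v,D)\ne\varnothing$ and therefore $v\in D_w$. In particular, every strong defender is already within distance two of another vertex of $D$: if $v\in D_s$ has a neighbour $w\notin D$, then $w\notin \epn(v,D)$ forces $w$ to have a second neighbour in $D$, placing $v$ at distance two from it.

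With this in hand, I would repair $D$ vertex by vertex: for each bad vertex $v$ choose one external private neighbour $w_v\in\epn(v,D)$ and set $S=D\cup\{w_v:\ v\text{ is bad}\}$. Then $S\supseteq D$ is still a dominating set, each bad $v$ and its chosen $w_v$ are now within distance one of one another, and every previously good vertex of $D$ remains within distance two of a vertex of $D\subseteq S$; thus $S$ is semitotal. Since the bad vertices form a subset of $D_w$ and we add at most one vertex per bad vertex, $|S|\le |D|+|D_w|$, which completes the argument.

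The main obstacle—really the only nontrivial point—is the structural claim that bad vertices must be weak defenders; everything else is bookkeeping. I would also check that no overcounting occurs in the cardinality estimate, which is automatic because the sets $\epn(v,D)$ are pairwise disjoint, and that the repair never reintroduces a vertex isolated within $S$, which holds since each added $w_v$ is adjacent to $v\in S$.
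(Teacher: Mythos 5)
Your proof is correct and follows essentially the same route as the paper's: the paper also takes a dominating $\partial_s(G)$-set $D$ and augments it by at most one neighbour for each vertex of $D_w$ to obtain a semitotal dominating set of cardinality at most $|D|+|D_w|=\n(G)-\partial_s(G)$. Your refinement of repairing only the ``bad'' vertices (shown to lie in $D_w$ via their external private neighbours) is a slightly more economical bookkeeping of the same construction.
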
 

\begin{proof}
By Lemma \ref{Strong-dif-dominante}, there exists  $D\in \mathcal{D}(G)$ which is  a $\partial_s(G)$-set. We consider a  set $X\subseteq V(G)$ of minimum cardinality among the sets satisfying that $D\subseteq X$ and for any $x\in D_w$, there exists  $u\in N(x)\cap X$. Since $D$ is a dominating set of $G$, it is readily seen that $X$ is a semitotal dominating set and also $|X|\le|D_s|+2|D_w|$. Hence, Remark \ref{eq-2} leads to 
$$
\partial_s(G)=\n(G)-(|D_s|+2|D_w|)\leq \n(G)-|X|\le \n(G)-\gamma_{_{t2}}(G).
$$
\end{proof}

The bound above is achieved by the graph shown in Figure\ref{Fig-Example-2}. In this case, $\gamma_{_{t2}}(G)=6$ and $\partial_s(G)=\n(G)-\gamma_{_{t2}}(G)=8$.

The following result is an immediate consequence of  Theorems \ref{Prop-RElat-Differentials} and  \ref{Semitotal}.
 
 \begin{proposition}\label{Corollary=gammat2-gamma2}
Let  $G$ be a graph.  If $\gamma_{_{t2}}(G)=\gamma_{_2}(G)$, then $\partial_s(G)=\n(G)-\gamma_{_2}(G)$.
 \end{proposition}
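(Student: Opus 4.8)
The plan is to sandwich $\partial_s(G)$ between two copies of the quantity $\n(G)-\gamma_{_2}(G)$, drawing the matching lower and upper bounds from the two cited theorems; the hypothesis $\gamma_{_{t2}}(G)=\gamma_{_2}(G)$ is precisely what glues the two estimates together. As a preliminary observation, I note that the mere appearance of $\gamma_{_{t2}}(G)$ in the hypothesis forces $G$ to have no isolated vertices, since semitotal domination is defined only in that setting; this guarantees that Theorem \ref{Semitotal} is applicable.

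For the lower bound I would invoke the left inequality of Theorem \ref{Prop-RElat-Differentials}, namely $\partial_s(G)\ge \n(G)-\min\{2\gamma(G),\gamma_{_2}(G)\}$. Because $\min\{2\gamma(G),\gamma_{_2}(G)\}\le \gamma_{_2}(G)$, this immediately yields $\partial_s(G)\ge \n(G)-\gamma_{_2}(G)$. For the upper bound, Theorem \ref{Semitotal} supplies $\partial_s(G)\le \n(G)-\gamma_{_{t2}}(G)$, and substituting the hypothesis $\gamma_{_{t2}}(G)=\gamma_{_2}(G)$ converts this into $\partial_s(G)\le \n(G)-\gamma_{_2}(G)$. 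Combining the two inequalities forces the equality $\partial_s(G)=\n(G)-\gamma_{_2}(G)$, which is the claim.

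There is no substantive obstacle: the argument is essentially a one-line squeeze between the two previously established bounds. The only point deserving a moment's care is verifying that the hypothesis of Theorem \ref{Semitotal} holds, and this is automatic from the presence of $\gamma_{_{t2}}(G)$ in the statement.
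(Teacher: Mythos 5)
Your proof is correct and matches the paper exactly: the authors likewise state the result as an immediate consequence of the lower bound in Theorem \ref{Prop-RElat-Differentials} and the upper bound in Theorem \ref{Semitotal}, squeezed together by the hypothesis $\gamma_{_{t2}}(G)=\gamma_{_2}(G)$. Your remark that the presence of $\gamma_{_{t2}}(G)$ implicitly rules out isolated vertices, so that Theorem \ref{Semitotal} applies, is a small but welcome addition of care.
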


%Notice that the case $\gamma_{_{t2}}(G)=2\gamma(G)$ corresponds to graphs without isolated vertices where every $\gamma(G)$-set is a packing.

The converse of Proposition \ref{Corollary=gammat2-gamma2} does not hold. For instance, for the graph $G$ shown in  Figure~\ref{Fig-Example-1} we have that $\partial_s(G)=4=\n(G)-\gamma_{_2}(G)$, while $\gamma_{_2}(G)=4>2=\gamma_{_{t2}}(G)$.

\subsection{Trivial bounds and extreme cases}

In this subsection we discuss the trivial bounds on $\partial_s(G)$ and we characterize the extreme cases.

\begin{proposition}\label{Trivial-Bounds-Perfect-Differential} 
Given  a  graph $G$ of order $\n(G)\ge 3$, the following statements hold.

\begin{enumerate}[{\rm (i)}]
\item $0\le \partial_s(G)\le \n(G)-2.$

\item $\partial_s(G)=0$ if and only if $\Delta(G)\le 1$.

\item $\partial_s(G)=1$ if and only if either $G\cong G_1$ or $G\cong G_1\cup G_2$, where $G_1\in \{C_3,P_3,P_4\}$ and $\Delta(G_2)\leq 1$.

\item $\partial_s(G) = \n(G)-2$ if and only if $\Delta(G)=\n(G)-1$ or $\gamma_{_2}(G)=2$.

\item $\partial_s(G) = \n(G)-3$ if and only if $\gamma_{_2}(G)=3$ and $\Delta(G)\leq \n(G)-2$ or $\gamma_{_2}(G)>3$ and $\Delta(G)=\n(G)-2$.
\end{enumerate}
\end{proposition}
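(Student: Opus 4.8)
The unifying device is the Gallai-type identity of Theorem~\ref{Th-Gallai-Roman-Strong}: since $\partial_s(G)=\n(G)-\gamma_{_I}(G)$, we have $\partial_s(G)=\n(G)-k$ if and only if $\gamma_{_I}(G)=k$. Thus items (i), (iv) and (v) reduce to determining precisely when $\gamma_{_I}(G)\in\{2,3\}$ together with the trivial estimate $\gamma_{_I}(G)\ge 2$, whereas (ii) and (iii) are most cleanly handled through $\partial_s$ directly. For (i), the lower bound is immediate from $\partial_s(G)\ge\partial_s(\varnothing)=0$, and the upper bound follows because a function of weight one can never be an IDF on a graph of order at least two (a single unit of weight cannot give $f(N(x))\ge 2$ to any vertex $x$ of weight zero), so $\gamma_{_I}(G)\ge 2$ and hence $\partial_s(G)\le\n(G)-2$. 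For (ii), if $\Delta(G)\le 1$ then $G$ is a disjoint union of copies of $K_1$ and $K_2$, and a short check gives $\gamma_{_I}=\n$ on each, i.e. $\partial_s(G)=0$; the converse is immediate from $\partial_s(G)\ge\Delta(G)-1$ (Theorem~\ref{Prop-RElat-Differentials3}), which forces $\partial_s(G)\ge 1$ whenever $\Delta(G)\ge 2$.

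For (iii) I would first record that $\partial_s$ is additive over connected components: a $\partial_s(G)$-set restricts to each component and the quantities $N_e$ and $D_w$ are local, so $\partial_s(G)$ is the sum of the component values. Hence $\partial_s(G)=1$ forces exactly one component $C$ with $\partial_s(C)=1$ and all others with $\partial_s=0$, i.e. $\Delta\le 1$ by (ii). On the distinguished component, $\partial_s(C)\ge\Delta(C)-1$ forces $\Delta(C)\le 2$, so $C$ is a path or a cycle; a direct computation (exhibiting a set $D$ with $D_w=\varnothing$ and $|N_e(D)|\ge 2$ once the path or cycle is long enough, and checking the small cases by hand) isolates exactly $P_3$, $P_4$ and $C_3$. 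The reverse implication uses the same additivity together with these three explicit values.

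For (iv), after reducing to $\gamma_{_I}(G)=2$, I would analyse the two shapes of a weight-two IDF $f$. Either a single vertex carries weight $2$, in which case the IDF condition says every other vertex is adjacent to it, i.e. $\Delta(G)=\n(G)-1$; or two vertices carry weight $1$, in which case every other vertex must be adjacent to both, i.e. these two vertices form a $2$-dominating set and $\gamma_{_2}(G)=2$ (recall $\gamma_{_2}(G)\ge 2$ always holds for $\n(G)\ge 2$). Conversely, each of the conditions $\Delta(G)=\n(G)-1$ and $\gamma_{_2}(G)=2$ manufactures such an IDF, giving $\gamma_{_I}(G)=2$.

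For (v), reduce to $\gamma_{_I}(G)=3$ and split this into $\gamma_{_I}(G)\ge 3$ and $\gamma_{_I}(G)\le 3$. By (iv), $\gamma_{_I}(G)\ge 3$ is equivalent to $\Delta(G)\le\n(G)-2$ and $\gamma_{_2}(G)\ge 3$. For the upper side, a weight-three IDF is of type $\{1,1,1\}$ or of type $\{2,1\}$: a $\{1,1,1\}$ function is an IDF exactly when its support is a $2$-dominating set, hence exists with weight three iff $\gamma_{_2}(G)\le 3$, while a $\{2,1\}$ function with the weight $2$ on a vertex $v$ is an IDF exactly when $v$ is adjacent to every vertex except possibly the carrier of the $1$, i.e. $\deg(v)\ge\n(G)-2$. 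Combining: when $\gamma_{_2}(G)=3$ one already has $\gamma_{_I}(G)\le\gamma_{_2}(G)=3$ by Theorem~\ref{LowerBoundTrees}, and with $\Delta(G)\le\n(G)-2$ this gives $\gamma_{_I}(G)=3$; when $\gamma_{_2}(G)\ge 4$ no $\{1,1,1\}$ IDF of weight three exists, so $\gamma_{_I}(G)=3$ can arise only from a $\{2,1\}$ IDF, and since $\Delta(G)\le\n(G)-2$ rules out a universal vertex this forces $\Delta(G)=\n(G)-2$; conversely $\Delta(G)=\n(G)-2$ produces such a $\{2,1\}$ IDF on the unique vertex outside $N[v]$. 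The main obstacle is precisely this case analysis: keeping straight the interaction between the two types of weight-three IDF, the value of $\gamma_{_2}(G)$, and the threshold $\Delta(G)=\n(G)-2$, verifying in particular that a vertex of degree exactly $\n(G)-2$ is what separates $\gamma_{_I}(G)=3$ from $\gamma_{_I}(G)=2$ when $\gamma_{_2}(G)$ is large.
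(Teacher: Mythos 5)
Your proposal is correct, and it reaches the characterizations by a route that is dual to, rather than identical with, the paper's. The paper never leaves the $\partial_s$ side: it fixes a dominating $\partial_s(G)$-set $D$ (Lemma \ref{Strong-dif-dominante}), reads off $\partial_s(G)=\n(G)-|D_s|-2|D_w|$ from Remark \ref{eq-2}, and solves $|D_s|+2|D_w|=2$ (resp.\ $=3$) by cases; you instead cross over via Theorem \ref{Th-Gallai-Roman-Strong} and classify the shapes of IDFs of weight $2$ and $3$. Under the correspondence $D_w\leftrightarrow V_2$, $D_s\leftrightarrow V_1$ these are the same case analysis in two languages, so the mathematical content of (iv) and (v) coincides; the genuinely different ingredients on your side are the upper bound in (i) via $\gamma_{_I}(G)\ge 2$ (the paper argues instead that a dominating $\partial_s(G)$-set has $|D|\ge 1$, forcing $|N_e(D)|\le \n(G)-2$ or $|D_w|\ge 1$), and the explicit additivity of $\partial_s$ over components in (iii), which the paper uses only tacitly when it splits off the component $G_2$ with $\Delta(G_2)\le 1$. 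Your framing has the advantage of making the $\gamma_{_I}$ reformulations in the concluding table immediate, and of not needing Remark \ref{eq-2} at all; the paper's has the advantage of being self-contained on the differential side. The one place you should supply the omitted details is the ``direct computation'' in (iii): one must actually check that $C_4$ (where $\gamma_{_2}=2$ gives $\partial_s=2$), $P_n$ for $n\ge 5$ and $C_n$ for $n\ge 5$ all admit a set $D$ with $D_w=\varnothing$ and $|N_e(D)|\ge 2$ (e.g.\ $D=\{v_1,v_3,v_5\}$ in $P_5$), so that only $P_3$, $P_4$ and $C_3$ survive — but the paper is equally terse at this point, so this is a matter of polish rather than a gap.
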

\begin{proof}
Since  $\partial_s(\varnothing)=0$, we have that $\partial_s(G)\ge 0$.  To prove the remaining statements, we take a $\partial_s(G)$-set $D\in \mathcal{D}(G)$, which exists due to Lemma \ref{Strong-dif-dominante}.  Now, since $|D|\ge 1  $, either $|N_e(D)|\le \n(G)-2$ or $|D_w|\ge 1$, which implies that 
$\partial_s(G)=\partial_s(D)\leq \n(G)-2$.

We proceed to prove (ii). If $\Delta(G)\le 1$, then $|N_e(D)|\le |D_w|$. This implies that $\partial_s(G)=\partial_s(D)=0$. 
Conversely,
if $\partial_s(G)=0$, then  $\deg(v)-1=\partial_s(\{v\})\le \partial_s(G)=0$ for every $v\in V(G)$, which implies that $\Delta(G)\le 1$. Therefore, (ii) follows.

We next proceed to prove (iii). If $\partial_s(G)=1$, then (i) and (ii)  lead to $\Delta(G)=2$. If $G$ is connected, then it is easy to see that $G\in \{C_3,P_3,P_4\}$. If $G$ is not connected, then by (ii) we deduce that $G\cong G_1\cup G_2$, where $G_1\in \{C_3,P_3,P_4\}$ and $\Delta(G_2)\leq 1$. The other implication is straightforward. Thus, (iii) follows.

Now, we proceed to prove (iv). If $\Delta(G)=\n(G)-1$, then (i) leads to $\partial_s(G) = \n(G)-2$. Moreover, if  
$\gamma_{_2}(G)=2$, then by Theorem \ref{Prop-RElat-Differentials} and (i) we deduce that $\partial_s(G) = \n(G)-2$.
Conversely, assume 
$\partial_s(G) = \n(G)-2$. In this case, Remark \ref{eq-2} leads to  $|D_s|+2|D_w|=2$, which implies that either $|D_w|=1$ and $|D_s|=0$ or $|D_s|=2$ and $|D_w|=0$. Since $D$ is a dominating set of $G$,  we obtain that $\Delta(G)=\n(G)-1$ or $\gamma_{_2}(G)=2$, as desired.  Therefore, (iv) follows.

Finally, we proceed to prove (v). If either $\gamma_{_2}(G)=3$ and $\Delta(G)\leq \n(G)-2$ or $\gamma_{_2}(G)>3$ and $\Delta(G)=\n(G)-2$, then by Theorem \ref{Prop-RElat-Differentials}, and the statements (i) and (iv) we deduce that $\partial_s(G)=\n(G)-3$. Conversely, assume that $\partial_s(G) =\n(G)-3$.
From (i) and (iv) we deduce that $\Delta(G)\leq \n(G)-2$ and $\gamma_{_2}(G)\geq 3$.  In this case, Remark \ref{eq-2} leads to $|D_s|+2|D_w|=3$, which implies that either $|D_w|=|D_s|=1$ or $|D_s|=3$ and $|D_w|=0$. If  $|D_w|=|D_s|=1$, then $\Delta(G)=\n(G)-2$, while if $|D_s|=3$ and $|D_w|=0$, then $\gamma_{_2}(G)=3$, which completes the proof. 
\end{proof}

%%%%% ---------------------------------

%-----------------------------------------

\section{Concluding remarks}\label{SectionConsequences-Gallai}

In this paper, we have introduced  the theory of strong differentials in  graphs. This new branch of the domination theory, allows us to develop the theory of Italian domination without the use of functions. 
This interesting connection is due to Theorem~\ref{Th-Gallai-Roman-Strong}, which is a Gallai-type theorem. 
For this reason, this article presents the challenge of obtaining new results on Italian domination following this novel approach. As an example, the following table summarizes some of those results obtained here. The first column describes the result that  combined with Theorem \ref{Th-Gallai-Roman-Strong} leads to the result on the second column.

\vspace{0,3cm}
\begin{center}
\begin{tabular}{|l|c|}
\hline 
\rule[-1ex]{0pt}{4ex} \cellcolor{gray!10} From & \cellcolor{gray!10} Result \\ 
\hline 
\rule[-1ex]{0pt}{4ex} Theorem \ref{Prop-RElat-Differentials} & $   \gamma_{_I}(G)\ge \gamma(G)+\sigma(G)$ \\ 
\hline 
\rule[-1ex]{0pt}{4ex} Proposition \ref{CharactEqGamma} & $\gamma_{_I}(G)=\gamma(G)$ $\longleftrightarrow$ $\gamma_{_2}(G)=\gamma(G)$\\ 
%\hline 
%\rule[-1ex]{0pt}{4ex} Proposition \ref{CondNecSufEqualityDifs}  & $\gamma_{_I}(G)=\gamma_{_2}(G)$ $\longleftrightarrow$ there exists a $\partial_s(G)$-set $D\in \mathcal{D}(G)$ with $D_w=\varnothing$ \\ 
\hline 
\rule[-1ex]{0pt}{4ex} Theorem \ref{BoundCover,gamma1} & 
 $\delta(G)\ge 2$ $\longrightarrow$
$\gamma_{_I}(G)\leq \frac{1}{2}\left(\n(G)+\gamma(G)\right)$ \\ 
\hline 
\rule[-1ex]{0pt}{4ex} Theorem \ref{Prop-RElat-Differentials2} & $\gamma_{_I}(G)\ge \gamma_{_R}(G)-\gamma(G)+1$ \\ 
%\hline 
%\rule[-1ex]{0pt}{4ex} Proposition 3.10 & $\gamma_{_I}(G)= \gamma_{_R}(G)$ $\longleftrightarrow$ there exists a $\partial_s(G)$-set $D$ with $D_s=\varnothing$\\ 
\hline 
\rule[-1ex]{0pt}{4ex} Proposition \ref{CharactEqBoundMaxDegree} & $\gamma_{_I}(G)=\frac{2\n(G)}{\Delta(G)+2}$ $\longleftrightarrow$ $\gamma_{_2}(G)=\frac{2\n(G)}{\Delta(G)+2}$  \\ 
\hline 
\rule[-1ex]{0pt}{4ex} Corollary \ref{CorollaryLOwerBoundCover} &  $\delta(G)\ge 2$ $\longrightarrow$
$\gamma_{_I}(G)\leq \alpha(G)$ \\ 
\hline 
\rule[-1ex]{0pt}{4ex} Theorem \ref{BoundCover,gamma} & $G$ connected and $\n(G)\ge 3$ $\longrightarrow$
$\gamma_{_I}(G)\leq \frac{1}{2}\beta(G)$  \\ 
\hline 
\rule[-1ex]{0pt}{4ex} Theorem \ref{Semitotal} & 
$\delta(G)\ge 1$ $\longrightarrow$
$\gamma_{_I}(G)\ge \gamma_{_{t2}}(G)$ \\ 
\hline 
\rule[-1ex]{0pt}{4ex} Proposition \ref{Corollary=gammat2-gamma2} & $\gamma_{_{t2}}(G)=\gamma_{_2}(G)  $  $\longrightarrow$ $\gamma_{_I}(G)=  \gamma_{_{2}}(G)$\\ 
\hline 
\end{tabular} 
\end{center}

% ------------------------------------------------------------------------
\end{document}